\tikzstyle{vertex}=[circle,draw=black,fill=black,inner sep=0,minimum size=3pt,text=white,font=\footnotesize]
\date{}
\title{\vspace{-0.8cm}A sharp threshold phenomenon in string graphs}
\author{
	Istv\'{a}n Tomon \thanks{\'{E}cole Polytechnique F\'{e}d\'{e}rale de Lausanne, Research partially supported by Swiss National Science Foundation grants no. 200020-162884 and 200021-175977. \emph{e-mail}: \textbf{istvantomon@gmail.com}} 
}
\theoremstyle{plain}
\newtheorem{theorem}{Theorem}
\newtheorem{claim}[theorem]{Claim}
\newtheorem{lemma}[theorem]{Lemma}
\newtheorem{conjecture}[theorem]{Conjecture}
\newtheorem{prop}[theorem]{Proposition}
\newtheorem{obs}{Observation}
\newtheorem*{reglemma}{Regularity Lemma}
\theoremstyle{definition}
\begin{document}

\maketitle
\sloppy

\begin{abstract}
%A \emph{string graph} is the intersection graph of a family of curves in the plane. It was proved by Lee (2017) that if $G$ is a string graph with $m$ edges, then $G$ contains a separator of size $O(\sqrt{m})$. An immediate consequence of this result is that there exists some absolute constant $c>0$ such that if $G$ is a string graph with $n$ vertices and at most $cn^{2}$ edges, then $V(G)$ contains two disjoint linear sized subsets $A$ and $B$ such that there are no edges between $A$ and $B$. We determine the exact threshold for which these linear sized subsets start to appear. 

%We prove that for every $\epsilon>0$ there exists $\delta>0$ such that if $G$ is a string graph with $n$ vertices and at most $(\frac{1}{4}-\epsilon)\frac{n^{2}}{2}$ edges, then $V(G)$ contains two disjoint subsets $A$ and $B$ of size $|A|=|B|\geq \delta n$ such that there are no edges between $A$ and $B$. On the other hand, for every positive integer $n$ there exists a string graph $G$ with $n$ vertices and less than $(\frac{1}{4}+\epsilon)\frac{n^{2}}{2}$ edges such that for every $A,B\subset V(G)$ if $|A|=|B|$ and there are no edges between $A$ and $B$, then $|A|=|B|=O(\frac{1}{\epsilon}\log n)$.

We prove that for every $\epsilon>0$ there exists $\delta>0$ such that the following holds. Let $\mathcal{C}$ be a collection of $n$ curves in the plane such that there are at most $(\frac{1}{4}-\epsilon)\frac{n^{2}}{2}$ pairs of curves  $\{\alpha,\beta\}$ in $\mathcal{C}$ having a nonempty intersection.  Then $\mathcal{C}$ contains two disjoint subsets $\mathcal{A}$ and $\mathcal{B}$ such that $|\mathcal{A}|=|\mathcal{B}|\geq \delta n$, and every $\alpha\in \mathcal{A}$ is disjoint from every $\beta\in\mathcal{B}$. %This verifies a recent conjecture of Pach and Tomon, who proved the same statement for $x$-monotone curves.

On the other hand, for every positive integer $n$ there exists a collection $\mathcal{C}$ of $n$ curves in the plane such that there at most $(\frac{1}{4}+\epsilon)\frac{n^{2}}{2}$ pairs of curves  $\{\alpha,\beta\}$ having a nonempty intersection, but if $\mathcal{A},\mathcal{B}\subset \mathcal{C}$ are such that $|\mathcal{A}|=|\mathcal{B}|$ and $\alpha\cap \beta=\emptyset$ for every $(\alpha,\beta)\in \mathcal{A}\times\mathcal{B}$, then $|\mathcal{A}|=|\mathcal{B}|=O(\frac{1}{\epsilon}\log n)$.
\end{abstract}

\section{Introduction}

The \emph{intersection graph} of family of sets $\mathcal{C}$ is the graph whose vertices are identified with the elements of $\mathcal{C}$, and two vertices are joined by an edge if the corresponding elements of $\mathcal{C}$ have a nonempty intersection. A \emph{curve} in the plane is the image of a continuous function $\phi:[0,1]\rightarrow\mathbb{R}^{2}$, and a \emph{string graph} is the intersection graph of a family of curves.

Combinatorial and computational properties of string graphs are extensively studied, both from a theoretical and a practical point of view. The concept of string graphs was introduced by Benzer \cite{B59} in order to study topological properties of genetic structures. Later, Sinden \cite{S66} considered string graphs to model printed circuits, and he proved the already non-trivial statement that not every graph is a string graph.

A \emph{separator} in a graph $G$ is a subset $S$ of the vertices of $G$ such that every connected component of $G-S$ has size at most $\frac{2|V(G)|}{3}$. A classical result of Lipton and Tarjan \cite{LT79} is that every planar graph on $n$ vertices contains a separator of size $O(\sqrt{n})$. Building on this result, Fox and Pach \cite{FP10_sep1} proved that if $\mathcal{C}$ is a family of curves and $m$ is the total number of crossings between the elements of $\mathcal{C}$, then the intersection graph of $G$ contains a separator of size $O(\sqrt{m})$. They conjectured the strengthening of their result that the same conclusion holds if $m$ denotes the number of intersecting pairs of curves in $\mathcal{C}$. Almost settling this conjecture, Matou\v{s}ek \cite{M14} proved that every string graph $G$ with $m$ edges contains a separator of size $O(\sqrt{m}\log m)$. Recently, the conjecture of Fox and Pach was confirmed by Lee \cite{L17}.   

\begin{theorem}\label{thm:separator}(\cite{L17})
	If $G$ is a string graph with $m$ edges, then $G$ contains a separator of size $O(\sqrt{m})$.
\end{theorem}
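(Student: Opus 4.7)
My plan is to go through the vertex-capacitated multicommodity-flow / LP-duality framework for separators, in the style of Leighton--Rao and its vertex-cut refinements by Feige--Hajiaghayi--Lee. Write $G=(V,E)$ with $|V|=n$ and $|E|=m$, and fix a curve representation $\mathcal{C}=\{\gamma_v : v\in V\}$ realizing $G$. By a standard rerouting argument I may assume every intersecting pair of curves crosses exactly once, so the total crossing number of the arrangement equals $m$. The reduction I would use says that it suffices to prove a vertex-capacitated Leighton--Rao inequality
\[
\sum_{uv \in E} d(u,v) \;\geq\; c\,\frac{\sqrt{m}}{n} \sum_{u,v \in V} d(u,v)
\]
for every (semi)metric $d$ on $V$ and some absolute constant $c>0$; rounding then produces the desired $O(\sqrt m)$ separator.

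The geometric input is that the arrangement of $\mathcal{C}$ gives a planar subdivision of $\mathbb{R}^2$ with $O(m)$ faces and $O(m)$ vertices (the crossings), and hence a planar auxiliary graph $H$ whose distances along each curve $\gamma_v$ control the graph distance in $G$. The strategy is then to lift the metric $d$ to $H$, apply the Lipton--Tarjan planar separator theorem to $H$ to obtain a short closed Jordan curve $\Gamma$ in the plane balancing the $d$-weight on the two sides, and take as the separator the set $S=\{v : \gamma_v\cap\Gamma\neq\emptyset\}$. The bound $|S|=O(\sqrt m)$ would follow from the fact that $\Gamma$ crosses the arrangement at most $O(\sqrt m)$ times, combined with an averaging argument charging each curve only once.

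\textbf{Main obstacle.} The essential difficulty, which distinguishes the theorem from Matou\v{s}ek's earlier $O(\sqrt m \log m)$ bound, is the logarithmic loss incurred by standard tree-metric embeddings or naive region decompositions of the plane: if one picks $\Gamma$ from a Bartal-style hierarchically random partition of $\mathbb{R}^2$, each curve is cut in expectation a number of times proportional to its degree times $\log m$, producing precisely the extra $\log$ factor. To remove it I would need a charging scheme in which each curve $\gamma_v$ is cut by $\Gamma$ in proportion to its \emph{local} crossing density rather than its global degree. This seems to require an iterated random planar partition whose scale on each curve is adapted to the local arrangement complexity, together with a probabilistic argument showing that most curves are hit $O(1)$ times on average, with the concentration coming from the crossing lemma applied locally. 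This is exactly where a purely combinatorial or fixed-embedding approach seems to run into the $\log m$ barrier, and where a finer topological/flow argument is needed.
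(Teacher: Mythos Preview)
The paper does not prove this theorem; it is quoted from Lee~\cite{L17} and used only as a black box (via Lemma~\ref{lemma:separator}). So there is no ``paper's own proof'' to compare against. I will therefore comment on your sketch on its merits and against Lee's actual argument.

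Your proposal contains a fatal error at the very first step. You write: ``By a standard rerouting argument I may assume every intersecting pair of curves crosses exactly once, so the total crossing number of the arrangement equals $m$.'' No such rerouting argument exists, and in fact this assertion is known to be false: there are string graphs on $n$ vertices every realization of which requires $2^{\Omega(n)}$ crossings (Kratochv\'{\i}l--Matou\v{s}ek; Schaefer--\v{S}tefankovi\v{c}~\cite{SS04}). If your claim were true, the theorem would be an immediate corollary of the Fox--Pach bound~\cite{FP10_sep1} that a string graph with $m$ \emph{total crossings} has a separator of size $O(\sqrt{m})$, and there would be nothing to do --- certainly Matou\v{s}ek's $O(\sqrt{m}\log m)$ bound~\cite{M14} would never have been an intermediate result. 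Everything downstream in your plan (the planar auxiliary graph $H$ with $O(m)$ vertices, the Lipton--Tarjan step, the Jordan curve $\Gamma$ crossing the arrangement $O(\sqrt{m})$ times) rests on this false reduction and collapses with it.

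For comparison, Lee's proof in~\cite{L17} does not touch the curve representation or any crossing count. He works in the more general setting of \emph{region intersection graphs} over a host graph $H$: graphs whose vertices correspond to connected subgraphs of $H$, adjacent when the subgraphs meet. String graphs are region intersection graphs over planar $H$. The argument combines a vertex-congestion/multicommodity-flow formulation with the $O(1)$-distortion $L_1$-embeddings available for shortest-path metrics on planar (more generally, minor-excluded) graphs. The point is that the flow--cut gap is controlled by the metric geometry of the host graph $H$, not by the combinatorics of the arrangement; this is what sidesteps the crossing-number blowup entirely and removes Matou\v{s}ek's logarithm. Your ``iterated random planar partition adapted to local crossing density'' heuristic is aimed at the wrong object: even with a perfect charging scheme, you would still be bounding $|S|$ in terms of the total number of crossings, which can be exponential in~$m$.
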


 Fox and Pach \cite{FP10_sep1,FP14_sep2} gave several applications of the existence of small separators in string graphs. 
 
 A \emph{bi-clique} in a graph $G$ is a pair of disjoint subsets of vertices $(A,B)$ such that $|A|=|B|$ and $ab\in E(G)$ for every $a\in A$ and $b\in B$, and a \emph{size} of a bi-clique $(A,B)$ is $|A|$. An immediate consequence of Theorem \ref{thm:separator} is that for every $0<\delta<\frac{1}{4}$ there exists $c>0$ such that if $G$ is a string graph with $n$ vertices and at most $cn^{2}$ edges, then the complement of $G$ contains a bi-clique of size at least $\delta n$. Pach and Tomon \cite{PT19} proved that if restrict our attention to \emph{$x$-monotone curves} (curves such that every vertical line intersects them in at most one point), then there is a sharp threshold for the edge density when linear sized bi-cliques start to appear in the complement of $G$. More precisely, they proved that for every $\epsilon>0$ there exists $\delta>0$ such that if $G$ is the intersection graph of $n$ $x$-monotone curves and $|E(G)|\leq (\frac{1}{4}-\epsilon)\frac{n^{2}}{2}$, then $\overline{G}$ contains a bi-clique of size at least $\delta n$. On the other hand, they showed that there exists a family of $n$ convex sets\footnote{Convex sets can be approximated arbitrarily closely by $x$-monotone curves, so intersection graphs of convex sets are also intersection graphs of $x$-monotone curves.} whose intersection graph $G$ has at most $(\frac{1}{4}+\epsilon)\frac{n^{2}}{2}$ edges, but the size of the largest bi-clique in $\overline{G}$ is $O(\frac{1}{\epsilon}\log n)$. Indeed, the existence of such families follows from the following result of Pach and T\'oth \cite{PT06+}: if $G$ is a graph whose vertex set can be partitioned into four parts $V_{1},V_{2},V_{3},V_{4}$ such that $V_{i}$ spans a clique for $i=1,2,3,4$, then $G$ can be realized as the intersection graph of convex sets. See Figure \ref{figure6} for a brief explanation. But then, a standard probabilistic construction shows that there exist such graphs $G$ with $n$ vertices, at most $(\frac{1}{4}+\epsilon)\frac{n^{2}}{2}$ edges such that the size of the largest bi-clique in $\overline{G}$ is $O(\frac{1}{\epsilon}\log n)$.    
 
 \begin{figure}[t]
 	\begin{center}
 	\begin{tikzpicture}[scale=1]
 	
 	\draw[dotted] (-1.2,0.5) -- (-5,2) ;
 	\draw[dotted] (-1.2,-0.5) -- (-5,0) ;
 	\draw[dotted] (-2.52,0.5) -- (-7.64,2) ;
 	 \draw[dotted] (-2.52,-0.5) -- (-7.64,0) ;
 	\draw[dashed] (-1.2,0.5) rectangle (-2.52,-0.5)  ;
  	\draw[dashed, fill=white] (-5,2) rectangle (-7.64,0) ;
  	
  	\draw (-7.64,1.1) -- (-5,1.6) ;
  	\draw (-7.64,1.6) -- (-5,1.1) ;
  	\draw (-7.64,1.28) -- (-5,1.28) ;
  	
  	\draw[red] (-7.64,0.9) -- (-7.4,1.2) -- (-5.24,1.2) -- (-5,0.9) ;
  	\draw[red] (-7.64,0.7) -- (-6.32,1.32) -- (-5,0.7) ;
 	
   \draw (0,0) circle (0.5) ;
  % \draw (0:3.73) circle (3.23) ;
  % \draw (120:3.73) circle (3.23) ;
  % \draw (240:3.73) circle (3.23) ;
   
    \draw [domain=120:240] plot ({3.73+3.23*cos(\x)}, {3.23*sin(\x)});
    \draw [domain=240:360] plot ({-1.865+3.23*cos(\x)}, {3.23+3.23*sin(\x)});
    \draw [red,domain=0:120] plot ({-1.865+3.23*cos(\x)}, {-3.23+3.23*sin(\x)});
    
    \node[] at (0,0) {$C_{1}$} ;
    \node[] at (0:3) {$C_{2}$} ;
    \node[] at (120:3) {$C_{3}$} ;
    \node[] at (240:3) {$C_{4}$} ;
    \node[vertex] at (0.5,0) {}; \node[] at (0.9,0.1) {$p_{12}$} ;
    \node[vertex] at (120:0.5) {}; \node[] at (120:0.8) {$p_{13}$} ;
	\node[vertex] at (240:0.5) {}; \node[] at (240:0.8) {$p_{14}$} ;
	
	\node[vertex] at (60:1.86) {}; \node[] at (47:1.86) {$p_{23}$} ;
	\node[vertex] at (180:1.86) {}; \node[] at (172:1.86) {$p_{34}$} ;
	\node[vertex] at (300:1.86) {}; \node[] at (287:1.86) {$p_{24}$} ;

 \end{tikzpicture}
 		\caption{Consider four pairwise touching circles $C_{1},C_{2},C_{3},C_{4}$ that touch at the six points $p_{ij}$ for $1\leq i<j\leq 4$. For each vertex in $V_{i}$, we can define a convex set that only slightly deviates from the circle $C_{i}$. It is possible to define these convex sets such that if $x\in V_{i}$ and $y\in V_{j}$ for some $1\leq i<j\leq 4$, then the corresponding convex sets intersect in the small neighborhood of $p_{ij}$ if $xy\in E(G)$, and these convex sets are disjoint otherwise.}
 \label{figure6}
\end{center}
\end{figure}
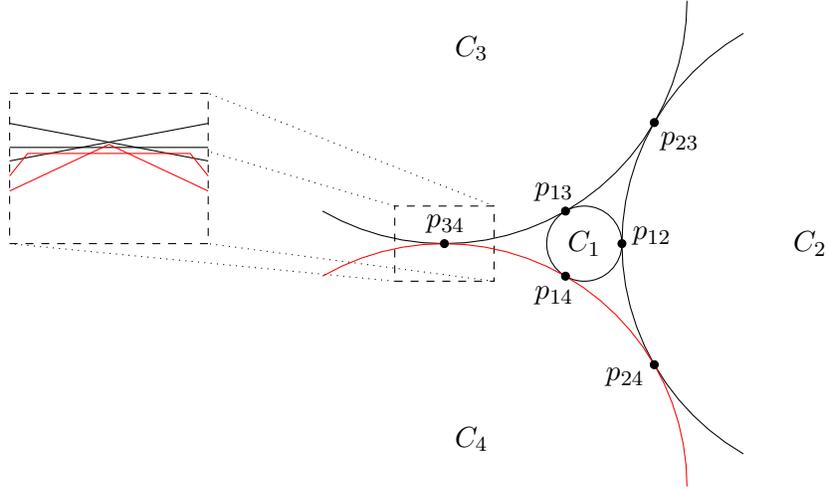
 
 Pach and Tomon \cite{PT19} also conjectured that their result holds without the assumption that the curves are $x$-monotone. Our main result is the proof of this conjecture.

\begin{theorem}\label{thm:mainthm}
	For every $\epsilon>0$ there exists $\delta>0$ such that the following holds. For every positive integer $n$ if $G$ is a string graph with $n$ vertices and at most $(\frac{1}{4}-\epsilon)\frac{n^{2}}{2}$ edges, then there exist two disjoint sets $A,B\subset V(G)$ such that $|A|=|B|\geq \delta n$ and there are no edges between $A$ and $B$. 
\end{theorem}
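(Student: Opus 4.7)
The extremal construction of Pach and T\'oth (Figure~\ref{figure6}) has exactly $4\binom{n/4}{2}\approx \tfrac{1}{4}\cdot\tfrac{n^2}{2}$ edges coming from four cliques of size $n/4$, with only small random cross-edges preventing a linear bi-clique in the complement. My strategy is therefore to show: if $G$ is a string graph with strictly fewer edges and $\overline{G}$ admits no bi-clique of size $\delta n$, then $G$ must actually have at least $\bigl(\tfrac14 - o_\epsilon(1)\bigr)\tfrac{n^2}{2}$ edges, giving a contradiction.

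The plan starts with a structural decomposition extracted from Lee's separator theorem (Theorem~\ref{thm:separator}). I would iteratively split $G$ along its separator, stopping only when the remaining piece has edge count below some small threshold $\epsilon' n^2$. Accumulating the removed separators yields a partition $V(G) = B \sqcup V_1 \sqcup \cdots \sqcup V_k$ with $|B| = o_\epsilon(n)$, such that every cross-cluster edge of $G$ passes through $B$. One then studies the cluster graph $R$ on $\{V_1,\ldots,V_k\}$ with $V_iV_j\in E(R)$ iff the bipartite density of $G[V_i,V_j]$ exceeds $\epsilon'$; this plays the role of the Szemer\'edi-type reduced graph used in the $x$-monotone proof of Pach--Tomon~\cite{PT19}.

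The final step splits into cases. If $R$ has a non-edge $V_iV_j$ with $|V_i|,|V_j|\ge \Omega(n)$, then $G[V_i,V_j]$ is sparse and a K\H{o}v\'ari--S\'os--Tur\'an-style deletion of high-degree vertices produces the desired linear bi-clique in $\overline{G}$. Otherwise $R$ is \emph{dense} in a weighted sense; combining $\sum_i \binom{|V_i|}{2} \le m + o(n^2)$ (from intra-cluster edges) with convexity forces $k \ge 4$ clusters of size $\approx n/4$ and hence $m \ge \bigl(\tfrac14 - o_\epsilon(1)\bigr)\tfrac{n^2}{2}$, contradicting the hypothesis. The main obstacle I anticipate is that a single application of Lee's theorem yields a separator of size $\Theta(n)$ in our regime, so the iterated decomposition requires delicate bookkeeping; this is precisely the step where the $x$-monotone ordering does the heavy lifting in~\cite{PT19}, and replacing it with a purely topological and combinatorial argument built from the separator theorem is the key new ingredient the proof will need.
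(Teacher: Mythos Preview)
Your proposal has a genuine gap, and it is precisely the one you yourself flag at the end. Lee's separator has size $O(\sqrt{m})$, so once $m$ is of order $n^{2}$ --- which is exactly the regime forced on you as soon as you assume $\overline{G}$ has no bi-clique of size $\delta n$ (Lemma~\ref{lemma:separator} makes $G$ $(4\delta,C)$-dense) --- every separator you remove has size $\Theta(n)$. There is no bookkeeping that turns a linear-sized removal at each step into a total removed set $B$ of size $o_{\epsilon}(n)$; the first application already eats a constant fraction of the vertices. Worse, a separator decomposition by definition leaves \emph{no} edges between the surviving pieces $V_{1},\dots,V_{k}$, so your cluster graph $R$ is automatically empty. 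The ``$R$ dense'' branch of your dichotomy can never be reached, and the edge-counting inequality $\sum_{i}\binom{|V_{i}|}{2}\leq m+o(n^{2})$ then only tells you there are at least five nonempty pieces, which yields no contradiction and no guarantee that two of them are large.

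What is actually missing from your plan is any use of the string-graph structure beyond the separator theorem. The paper's proof exploits a second, independent property: a string graph contains no induced weak-subdivision of $K_{5}$ (Lemma~\ref{lemma:K5}). Lemma~\ref{lemma:separator} disposes of the sparse case exactly as you suggest; the real content is the dense, $\delta$-full case, which is handled by the Regularity Lemma rather than by iterated separators. One shows that if the reduced graph $(R,w)$ has total weight below $(\tfrac14-\epsilon)\tfrac{k^{2}}{2}$, then for some partial subdivision $H$ of $K_{5}$ on at most eight vertices the reduced graph admits a certain ``$(H,\epsilon_{1})$-admissible'' configuration of clusters (Lemma~\ref{lemma:admissible1}); an embedding lemma (Lemma~\ref{lemma:embedding}) then converts this into an induced weak-subdivision of $K_{5}$ in $G$, contradicting Lemma~\ref{lemma:K5}. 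The sharp constant $\tfrac14$ emerges from a Tur\'an-type optimisation on the reduced graph (Proposition~\ref{prop:1/4}), not from a convexity bound on cluster sizes. In short, the separator theorem alone cannot see the threshold at $\tfrac14$; you need the forbidden-subdivision property plus regularity.
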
   

The problem of finding large bi-cliques in intersection graphs and in their complements gained a lot of interest recently. Fox, Pach and T\'oth \cite{FPT10} proved that if $G$ is the intersection graph of $n$ convex sets, then either $G$ or its complement contains a bi-clique of size $\Omega(n)$. Also, if $G$ is the intersection graph  of $n$ $x$-monotone curves, then either $G$ contains a bi-clique of size $\Omega(\frac{n}{\log n})$, or $\overline{G}$ contains a bi-clique of size $\Omega(n)$, and these bounds are best possible. Pach and Tomon \cite{PT19} gave a different proof of the latter result using ordered graphs. Fox, Pach and T\'oth \cite{FPT11} also proved that for every $k\in\mathbb{N}$ there exists $c_{k}>0$ such that if $G$ is the intersection graph of $n$ curves, where any two of the curves cross at most $k$ times, then either $G$ or its complement contains a bi-clique of size at least $c_{k}n$. 

In the case there are no restriction on the curves, Fox and Pach \cite{FP12_inc} proved that every dense string graph contains a dense incomparability graph as subgraph. By results of Fox \cite{F06} and Fox, Pach and T\'oth \cite{FPT10}, this implies that every dense string graph contains a bi-clique of size $\Omega(\frac{n}{\log n})$. Combining this with the result of Lee \cite{L17}, we get that if $G$ is a string graph, either $G$ contains a bi-clique of size $\Omega(\frac{n}{\log n})$, or $\overline{G}$ contains a bi-clique of size $\Omega(n)$; again, these bounds are best possible. Let us remark that incomparability graphs exhibit a similar threshold phenomenon as Theorem \ref{thm:mainthm}. Indeed, in case $G$ is an incomparability graph with $n$ vertices and at most $(\frac{1}{2}-\epsilon)\frac{n^{2}}{2}$ edges, then $\overline{G}$ contains a bi-clique of size $\Omega(\epsilon n)$, but there are incomparability graphs $G$ with at most $(\frac{1}{2}+\epsilon)\frac{n^{2}}{2}$ edges such that the size of the largest bi-clique in $\overline{G}$ is $O(\frac{1}{\epsilon}\log n)$, see \cite{FPT10}.

Finally, the existence of linear sized bi-cliques in the complement of not too dense string graphs also follows from a recent graph theoretic result of Chudnovsky, Scott, Seymour and Spirkl \cite{CSSS18}. 

\begin{theorem}(\cite{CSSS18})\label{thm:subdivision}
	Let $H$ be a graph. Then there exists $\epsilon>0$ such that if $G$ is a graph with $n$ vertices and at most $\epsilon n^{2}$ edges, and $G$ does not contain a subdivision of $H$ as an induced subgraph, then $\overline{G}$ contains a bi-clique of size at least $\epsilon n$.
\end{theorem}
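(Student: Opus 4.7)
\medskip
\noindent\textbf{Proof plan.}
The plan is to establish the contrapositive: assuming $G$ has $n$ vertices, at most $\epsilon n^{2}$ edges, and no induced subdivision of $H$, I will produce a bi-clique of size $\geq \epsilon n$ in $\overline{G}$. Write $h:=|V(H)|$ and $m:=|E(H)|$. The whole argument is driven by a single routing lemma combined with a Ramsey-type cleanup that produces ``template'' sets for the branch vertices of the desired subdivision.

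First, I would prove the following \emph{induced linkage lemma}: if $\overline{G}$ has no bi-clique of size $\delta n$, then for any disjoint $A,B\subset V(G)$ with $|A|,|B|\geq \delta n$ and any forbidden set $S\subset V(G)\setminus(A\cup B)$ with $|S|\leq \delta n/4$, there is an induced path in $G-S$ from some $a\in A$ to some $b\in B$ whose internal vertices lie outside $A\cup B$. To prove it, run a BFS starting from $A$ in $G-S$ restricted to $A\cup B\cup(V(G)\setminus(A\cup B\cup S))$. If no layer reaches $B$, then the reachable set $R$ contains $A$ (so $|R|\geq\delta n$) and has no edges to $B$ in $G$, so $R$ and $B$ form a bi-clique of size $\delta n$ in $\overline{G}$, contradicting the hypothesis. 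Any shortest $A$--$B$ path output by this BFS is automatically induced, since a chord would yield a shorter path.

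Next, a Ramsey-type cleanup extracts pairwise disjoint linear-sized template sets $U_{1},\dots,U_{h}\subset V(G)$ with a branch vertex $v_{i}\in U_{i}$, chosen so that for each non-edge $v_{i}v_{j}\notin E(H)$ the branch vertex $v_{i}$ is non-adjacent to $U_{j}\setminus\{v_{j}\}$ (and symmetrically), and for each edge $v_{i}v_{j}\in E(H)$ the pair $(U_{i},U_{j})$ has a rich bipartite structure. If this cleanup fails at some pair, one directly reads off a bi-clique of size $\geq\epsilon n$ in $\overline{G}$ and is done. Having the template, I would then process the edges $e_{1},\dots,e_{m}$ of $H$ in order: having already built induced paths $P_{1},\dots,P_{t-1}$, for $e_{t}=v_{i}v_{j}$ apply the linkage lemma inside a scratch space disjoint from all $U_{k}$, with forbidden set $S$ equal to the union of previously placed paths, the other branch vertices $v_{k}$ ($k\neq i,j$), and---crucially---the neighbourhoods of all these inside the scratch space.

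The main obstacle is the \emph{chord-avoidance} in this last step: $P_{t}$ must have no edge to the interior of any earlier $P_{s}$ (except at common branch endpoints) and no edge to any $v_{k}$ with $v_{k}v_{i},v_{k}v_{j}\notin E(H)$. This forces $S$ to contain an entire neighbourhood expansion of previously placed material, which is affordable only when paths are routed through vertices of low degree in $G$. Since $|E(G)|\leq\epsilon n^{2}$, at least $n/2$ vertices of $G$ have degree at most $4\epsilon n$, and one can insist that the scratch space consist only of such low-degree vertices from the start of the cleanup. The final $\epsilon(H)$ then emerges as a small function of $h$ (governed by the Ramsey step); balancing the parameters so that $|S|\leq \delta n/4$ holds throughout while the scratch space remains linear is the delicate technical core of the argument.
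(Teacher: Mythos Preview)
This theorem is not proved in the paper: it is quoted from \cite{CSSS18} and used as a black box. There is therefore no ``paper's own proof'' to compare your proposal against.

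As for the proposal itself, the outline has the right shape (anticomplete pairs versus induced routing), but there is a genuine gap in the chord-avoidance step. Your linkage lemma produces a \emph{shortest} induced $A$--$B$ path, but gives no control on its length; in a sparse graph such a path can still have linear length. When you then place $P_{1},\dots,P_{t-1}$ and form the forbidden set $S$ by taking neighbourhoods of all previously placed internal vertices, you are taking the neighbourhood of a set of potentially $\Theta(n)$ vertices. Even if every vertex in the scratch space has degree at most $4\epsilon n$, that only bounds edges \emph{into} the scratch space, not edges \emph{from} path vertices outward; and in any case a linear-length path with linear degrees yields an $S$ of size $\Theta(\epsilon n^{2})$, which swamps the $\delta n/4$ budget. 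The ``balancing of parameters'' you allude to cannot fix this without an additional mechanism to bound path lengths, which is exactly the hard part of \cite{CSSS18}.

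There is a second, smaller gap: the linkage lemma connects two \emph{sets} of size $\geq\delta n$, but you ultimately need paths between two \emph{specific} branch vertices $v_{i},v_{j}$. Bridging this (e.g.\ by routing from $N(v_{i})$ to $N(v_{j})$ inside the scratch space and then appending the last edges) interacts badly with the requirement that internal vertices of $P_{t}$ be non-adjacent to the other branch vertices $v_{k}$, and your cleanup does not obviously arrange this.
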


 Indeed, if $G$ is a string graph, then $G$ does not contain a subdivision of $K_{5}$ as an induced subgraph (we discuss this in more detail below). In order to prove Theorem \ref{thm:mainthm}, we shall also consider graphs which avoid certain weaker notions of subdivisions of $K_{5}$ as induced subgraph.

Our paper is organized as follows. In Section \ref{sect:proof}, we prove Theorem \ref{thm:mainthm}. In Section \ref{sect:remarks}, we conclude our paper with some open problems and remarks. But first, let us agree on some terminology.

\subsection{Preliminaries and notation}

Given two curves $\alpha$ and $\beta$, a \emph{crossing} between $\alpha$ and $\beta$ is a point $z\in\alpha\cap\beta$ such that $\alpha$ passes to the other side of $\beta$ at $z$. Given a collection of curves $\mathcal{C}$, we assume that if two curves $\alpha,\beta\in\mathcal{C}$ intersect at some point $z$, then $z$ is a crossing between $\alpha$ and $\beta$. Indeed, by a small perturbation any intersection point can be turned into a crossing without changing the intersection graph. Also, we assume that there are a finite number of crossings between any two curves. Indeed, by a result of Schaefer and \v{S}tefankovi\v{c} \cite{SS04}, every string graph can be realized as the intersection graph of such a collection of curves.

For a positive integer $t$, $K_{t}$ denotes the complete graph on $t$ vertices. Let $G$ be a graph. The complement of $G$ is denoted by $\overline{G}$. If $v\in V(G)$, then $N(v)=\{w\in V(G):vw\in E(G)\}$ is the neighborhood of $v$. Also, if $U$ is a subset of the vertices, then $G[U]$ is the subgraph of $G$ induced by $U$. Moreover, if $U$ and $V$ are disjoint subsets of $V(G)$, then $E(U,V)$ is the set of edges in $G$ with one endpoint in $U$ and one endpoint in $V$.

\section{Proof of Theorem \ref{thm:mainthm}}\label{sect:proof}

In this section, we prove Theorem \ref{thm:mainthm}. First, we shall reduce Theorem \ref{thm:mainthm} to a completely graph theoretic statement. In order to do this, we make use of the following immediate consequence of Theorem \ref{thm:separator}.

\begin{lemma}\label{lemma:separator}
	There exists a constant $C>0$ such that for every positive integer $n$, if $G$ is a string graph with $n$ vertices and at most $Cn^{2}$ edges, then $\overline{G}$ contains a bi-clique of size at least $\frac{n}{4}$. 
\end{lemma}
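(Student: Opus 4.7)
The plan is to apply Theorem~\ref{thm:separator} once and then partition the resulting components. By Theorem~\ref{thm:separator}, there is an absolute constant $c_0$ such that $G$ has a separator $S$ with $|S| \leq c_0\sqrt{|E(G)|} \leq c_0\sqrt{C}\,n$. Choosing $C$ small enough that $c_0\sqrt{C} \leq \tfrac{1}{12}$ forces $|S| \leq n/12$, so the components $C_1,\ldots,C_k$ of $G-S$ have total size $N \geq \tfrac{11n}{12}$ while each has size at most $\tfrac{2n}{3}$ by the separator property.

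The key observation is that distinct components of $G-S$ have no edges between them in $G$, so once the components are partitioned into two groups $\mathcal{A}$ and $\mathcal{B}$ with $\sum_{C\in\mathcal{A}} |C| \geq n/4$ and $\sum_{C\in\mathcal{B}} |C| \geq n/4$, any subsets of common size $\lceil n/4 \rceil$ from $\bigcup \mathcal{A}$ and $\bigcup \mathcal{B}$ form a bi-clique in $\overline{G}$ of size at least $n/4$. The task therefore reduces to producing a balanced enough partition of the components.

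I would split into two cases according to the largest component. If some component $C^{*}$ satisfies $|C^{*}| \geq n/4$, then setting $\mathcal{A} = \{C^{*}\}$ and $\mathcal{B} = \{C_i : C_i \neq C^{*}\}$ gives $|A| = |C^{*}| \in [n/4,\,2n/3]$ and $|B| \geq \tfrac{11n}{12} - \tfrac{2n}{3} = n/4$. Otherwise every component has size less than $n/4$, and a standard greedy assignment (process the components in any order and always place the next one on the currently lighter side) maintains the invariant $\bigl||A|-|B|\bigr| \leq \max_i |C_i| < n/4$ throughout, whence at the end $\min(|A|,|B|) \geq \tfrac{1}{2}\bigl(\tfrac{11n}{12} - \tfrac{n}{4}\bigr) = n/3$.

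There is no real obstacle: the whole argument is driven by Theorem~\ref{thm:separator}, and the only care needed is to fix $C$ small enough (for instance $C \leq 1/(144 c_0^{2})$) so that $|S|$ is a sufficiently small fraction of $n$ for the balancing to yield sides of size at least $n/4$ in either case.
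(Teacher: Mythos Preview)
Your argument is correct and is exactly the intended derivation: the paper does not spell out a proof but simply notes that the lemma is an immediate consequence of Theorem~\ref{thm:separator}, and your separator-then-balance argument is the standard way to cash this out. Nothing needs to change.
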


Note that the constant $\frac{1}{4}$ has no significance in this lemma, any positive constant would serve our purposes. Therefore, instead of citing Theorem \ref{thm:separator}, we could have cited Theorem \ref{thm:subdivision} as well to get somewhat weaker version of this lemma. In the rest of the paper, $C$ denotes the constant described by Lemma \ref{lemma:separator}. This lemma tells us that in order to prove Theorem \ref{thm:mainthm}, it is enough to consider dense graphs. But for dense graphs, we can use powerful tools such as the Regularity lemma.

 Let us introduce some of the main notions used in this section. Let $G$ be a graph with $n$ vertices. If $0<\delta<\frac{1}{2}$, say that  $G$  is \emph{$\delta$-full} if for every $A,B\subset V(G)$ satisfying $|A|,|B|\geq \delta n$ there exists an edge between $A$ and $B$. The \emph{density} of $G$ is $d(G)=\frac{|E(G)|}{n^{2}}.$ Say that $G$ is \emph{$(\alpha,\beta)$-dense} if every induced subgraph of $G$ with at least $\alpha n$ vertices has density at least $\beta$.  By Lemma \ref{lemma:separator}, if $G$ is a string graph that is $\delta$-full, then $G$ is $(4\delta,C)$-dense.

If $H$ is a graph, a \emph{$k$-subdivision} of an edge $xy\in E(H)$ is the following operation: we replace the edge $xy$ by a path $x=x_{0},x_{1},\dots,x_{k+1}=y$, where $\{x_{1},\dots,x_{k}\}$ is disjoint from $V(H)$. A subdivision of an edge is a $k$-subdivision for some $k\geq 1$. A graph $H'$ is a $k$-subdivision (or subdivision) of $H$ if we get $H'$ from $H$ by $k$-subdividing (or subdividing) every edge of $H$. Also, $H'$ is a \emph{partial subdivision} of $H$ if we get $H'$ from $H$ by subdividing some (possibly zero) edges of $H$.  If $H'$ is a partial subdivision of $H$, we refer to the vertices of $H$ in $H'$ as \emph{branch-vertices}, and we refer to the other vertices as \emph{side-vertices}.

If $H'$ is a $k$-subdivision (or subdivision) of $H$, let $H''$ be a graph we get from $H'$ by adding edges between some pairs of side-vertices that belong to the subdivisions of neighboring edges of $H$. Then $H''$ is called a \emph{weak-$k$-subdivision} (or weak-subdivision) of $H$. More precisely, $H''$ is a \emph{weak-$k$-subdivision} (or weak-subdivision) of $H$ if there exists a $k$-subdivision (or subdivision) $H'$ of $H$ such that ${V(H'')=V(H')}$, $E(H')\subset E(H'')$ and for every $uv\in E(H'')\setminus E(H')$, $u$ and $v$ are side-vertices of $H'$, and if $u$ belongs to the subdivision of the edge $xy\in E(H)$ and $v$ belongs to the subdivision of $x'y'\in E(H)$, then $xy\neq x'y'$, and $xy$ and $x'y'$  share an endpoint. See Figure \ref{figure1} for an example.

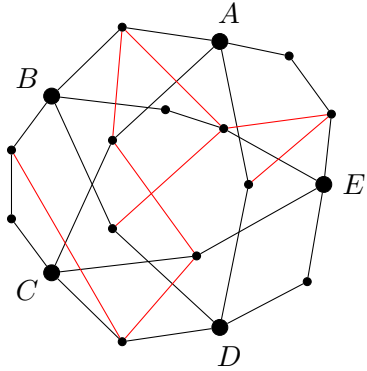
\begin{figure}[t]
	\begin{center}
		\begin{tikzpicture}[scale=1]
		
	  \node[vertex,minimum size=6pt] (A) at (72:2) {} ;
	  \node[vertex,minimum size=6pt] (B) at (144:2) {} ;
	  \node[vertex,minimum size=6pt] (C) at (216:2) {} ;
	  \node[vertex,minimum size=6pt] (D) at (288:2) {} ;
	  \node[vertex,minimum size=6pt] (E) at (0:2) {} ; 
	  
	  \node[] at (72:2.4) {$A$} ;
	  \node[] at (144:2.4) {$B$} ;
	  \node[] at (216:2.4) {$C$} ;
	  \node[] at (288:2.4) {$D$} ;
	  \node[] at (0:2.4) {$E$} ;

	  \node[vertex] (AB) at (108:2.2) {}; \draw (A) -- (AB) -- (B) ;
	  \node[vertex] (BC1) at (168:2.2) {};
	  \node[vertex] (BC2) at (192:2.2) {}; \draw (B) -- (BC1) -- (BC2) -- (C) ;
	  \node[vertex] (CD) at (252:2.2) {}; \draw (C) -- (CD) -- (D) ;
	  \node[vertex] (DE) at (324:2.2) {}; \draw (D) -- (DE) -- (E) ;
	  \node[vertex] (EA1) at (24:2.3) {};
	  \node[vertex] (EA2) at (48:2.3) {}; \draw (E) -- (EA1) -- (EA2) -- (A) ;
	  \node[vertex] (AC) at (144:1) {} ; \draw (A) -- (AC) -- (C) ;	
	  \node[vertex] (BD) at (216:1) {} ; \draw (B) -- (BD) -- (D) ;	
	  \node[vertex] (CE) at (288:1) {} ; \draw (C) -- (CE) -- (E) ;	
	  \node[vertex] (DA) at (0:1) {} ; \draw (D) -- (DA) -- (A) ;	
	  \node[vertex] (EB1) at (48:1) {} ; 
	  \node[vertex] (EB2) at (96:1) {} ; \draw (E) -- (EB1) -- (EB2) -- (B) ;
	  
      \draw[red] (AB) -- (EB1) ; \draw[red] (AB) -- (AC) ;  \draw[red] (CD) -- (CE) ; \draw[red] (CD) -- (BC1) ; 
       \draw[red] (EA1) -- (DA) ; \draw[red] (EA1) -- (EB1) ;  \draw[red] (EB1) -- (BD) ; \draw[red] (AC) -- (CE) ;

		\end{tikzpicture}
		\caption{A weak-subdivision of $K_{5}$, where $A,B,C,D,E$ are the branch-vertices. The edges that are part of the weak-subdivision but not the subdivision are red.}
		\label{figure1}
	\end{center}
\end{figure}

Slightly generalizing Lemma 3.2 in \cite{PT06+} and Lemma 11 in \cite{PRY18}, we show that string graphs do not contain weak-subdivisions of $K_{5}$.

\begin{lemma}\label{lemma:K5}
	If $G$ is a string graph, then $G$ does not contain a weak-subdivision of $K_{5}$ as an induced subgraph.
\end{lemma}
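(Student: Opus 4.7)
The plan is to derive a contradiction by constructing a plane drawing of $K_5$ in which no two non-adjacent edges cross; such a drawing forces $K_5$ to be planar, which is absurd.

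First, I would assume for contradiction that a string graph $G$ contains an induced weak-subdivision $H''$ of $K_5$, with branch vertices $v_1,\dots,v_5$ and paths $P_{ij}$ replacing the edges $v_iv_j$ of $K_5$, and fix a realization of $G$ by curves $\{\gamma_u\}_{u\in V(G)}$. Since every edge of $K_5$ is subdivided at least once and weak edges go only between side-vertices, the branch vertices are pairwise non-adjacent in $H''$; as $H''$ is induced in $G$, the arcs $\gamma_{v_1},\dots,\gamma_{v_5}$ are therefore pairwise disjoint. For each edge $e=v_iv_j$ of $K_5$ I would set $\Omega_e=\bigcup_u \gamma_u$ where $u$ ranges over the side-vertices of $P_e$; as a chain of consecutively intersecting curves, $\Omega_e$ is connected and touches both $\gamma_{v_i}$ and $\gamma_{v_j}$.

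The crucial step is to verify, using the induced hypothesis together with the precise definition of a weak-subdivision, two disjointness properties: (a) $\Omega_e\cap\gamma_{v_k}=\emptyset$ for $k\notin\{i,j\}$, since no side-vertex of $P_e$ is adjacent in $H''$ to a branch outside $\{v_i,v_j\}$; and (b) $\Omega_e\cap\Omega_{e'}=\emptyset$ for any two \emph{non-incident} edges $e,e'$ of $K_5$, since weak chords of $H''$ only connect side-vertices of paths that share a branch endpoint. Next I would collapse each arc $\gamma_{v_i}$ to a single point $p_i$; because the $\gamma_{v_i}$'s are pairwise disjoint topological arcs, the quotient space remains homeomorphic to $\mathbb{R}^2$ and the $p_i$'s are five distinct points. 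For each edge $e=v_iv_j$ of $K_5$, the image of $\Omega_e\cup\gamma_{v_i}\cup\gamma_{v_j}$ is a path-connected set containing both $p_i$ and $p_j$, so I can choose a simple arc $\alpha_e$ from $p_i$ to $p_j$ inside it. Properties (a) and (b) then guarantee $\alpha_e\cap\alpha_{e'}=\emptyset$ for every pair of non-incident edges $e,e'$, producing a drawing of $K_5$ in the plane with no crossings between non-adjacent edges.

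The step I expect to need the most care is the final one: the remaining crossings in this drawing are between arcs of incident edges (coming from the weak intersections $\Omega_e\cap\Omega_{e'}$), and these must be removed by local re-routing near the common branch point in order to obtain an honest planar embedding of $K_5$. This is essentially a small instance of the (weak) Hanani--Tutte theorem, and it is what converts the construction into the desired contradiction with the non-planarity of $K_5$.
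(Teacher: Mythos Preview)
Your argument is essentially the same as the paper's: build, from the curve realization, a drawing of $K_5$ in which only adjacent edges may cross, and then remove those crossings to contradict non-planarity. The paper differs from you in two cosmetic ways. First, instead of collapsing each branch curve $\gamma_{v_i}$ to a point, it simply picks a point $x_i\in\gamma_{v_i}$ and lets the analogue of your $\Omega_e$ include the two branch curves as well; this sidesteps the quotient entirely. Second, instead of invoking Hanani--Tutte, it removes crossings between adjacent arcs by the explicit ``swap at a crossing'' operation, which strictly decreases the number of crossings.

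One caveat about your version: the collapsing step is where you should be careful. In the paper's setting a curve is merely the image of a continuous map $[0,1]\to\mathbb{R}^2$, not necessarily a simple arc, so your assertion that the quotient of $\mathbb{R}^2$ by the $\gamma_{v_i}$'s is again $\mathbb{R}^2$ needs justification (Moore's theorem applies to cellular sets, and a non-simple curve need not be cellular). This is easy to patch---either argue first that one may take the curves to be simple arcs, or bypass the quotient altogether by choosing points $p_i\in\gamma_{v_i}$ as the paper does; then $\alpha_e$ can be taken inside $\Omega_e\cup\gamma_{v_i}\cup\gamma_{v_j}$, and your properties (a), (b) together with the pairwise disjointness of the branch curves already give $\alpha_e\cap\alpha_{e'}=\emptyset$ for non-incident $e,e'$ without any collapsing.
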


\begin{proof}
	It is enough to show that if $H'$ is a weak-subdivision of $K_{5}$, then $H'$ is not a string graph. Suppose that $H'$ is a string graph and let $\mathcal{C}$ be a collection of curves realizing $H'$. Let $v_{1},\dots,v_{5}$ be the branch-vertices of $H'$ and let $C_{i}\in \mathcal{C}$ be the curve corresponding to $v_{i}$ for $i=1,\dots,5$. Let $x_{i}$ be an arbitrary point of $C_{i}$. For every $1\leq i<j\leq 5$, there exists a path connecting $v_{i}$ and $v_{j}$ in $H'$. Let $X_{i,j}$ be the union of the curves in $\mathcal{C}$ corresponding to the vertices of this path. Then $X_{i,j}$ is a connected set in the plane containing $x_{i}$ and $x_{j}$, so there exists a curve $\gamma_{i,j}\subset X_{i,j}$ with endpoints $x_{i}$ and $x_{j}$. Note that as $H'$ is a weak-subdivision, $X_{i,j}\cap X_{i',j'}\neq \emptyset$ if and only if $\{i,j\}\cap \{i',j'\}\neq \emptyset$. Hence, $\cup_{1\leq i<j\leq 5} \gamma_{i,j}$ is a drawing of $K_{5}$ in which if two edges cross, then they share a vertex. 
	
	However, such a drawing can be turned into a planar drawing of $K_{5}$ by repeating the following operation. Suppose that the curves $\gamma_{i,j}$ and $\gamma_{i,k}$ cross in some point $z$. Let $\gamma_{1}$ be the subcurve of $\gamma_{i,j}$ with endpoints $x_{i}$ and $z$, let $\gamma_{2}$ be the subcurve of $\gamma_{i,j}$ with endpoints $z$ and $x_{j}$,  let $\gamma_{3}$ the subcurve of $\gamma_{i,k}$ with endpoints $x_{i}$ and $z$, and let $\gamma_{4}$ be the subcurve of $\gamma_{i,k}$ with endpoints $z$ and $x_{k}$. Replace $\gamma_{i,j}$ with $\gamma_{i,j}'=\gamma_{1}\cup \gamma_{4}$, and replace $\gamma_{i,k}$ with $\gamma_{i,k}'=\gamma_{2}\cup \gamma_{3}$, and perturb $\gamma_{i,j}'$ and $\gamma_{i,k}'$ slightly in the neighborhood of  $z$ such that $\gamma_{i,j}'$ and $\gamma_{i,k}'$ no longer have an intersection point around $z$. See Figure \ref{figure5} for an illustration of this operation. After such an operation, the number of crossings in the drawing of $K_{5}$ decreases, so in a finite number of steps, we get a planar drawing of $K_{5}$. As $K_{5}$ is not planar, this is a contradiction.  
\end{proof}

\begin{figure}[t]
	\begin{center}
		\begin{tikzpicture}[scale=1]
		
		\draw  plot[smooth, tension=.7] coordinates {(5.5,-2) (5,-0.5) (5.45,0.5) (4.5,1) (3,1.5)};
		\draw  plot[smooth, tension=.7] coordinates {(5.5,-2) (6.5,-1) (6.5,0) (5.55,0.5) (6,1.5) (7,2) (8,2)};
		
		\draw  plot[smooth, tension=.7] coordinates {(-3.5,-2) (-4,-0.5) (-3.5,0.5) (-3,1.5) (-2,2) (-1,2)};
		\draw  plot[smooth, tension=.7] coordinates {(-3.5,-2) (-2.5,-1) (-2.5,0) (-3.5,0.5) (-4.5,1) (-6,1.5)};
		
		\node[] at (-3.5,-2.3) {$x_{i}$} ; \node[vertex, minimum size=6pt] at (-3.5,-2) {} ;
		\node[] at (-1,2.3) {$x_{k}$} ; \node[vertex, minimum size=6pt] at (-1,2) {} ;
		\node[] at (-6,1.8) {$x_{j}$} ; \node[vertex, minimum size=6pt] at (-6,1.5) {} ;
		\node[] at (-3.2,0.5) {$z$} ; \node[vertex, minimum size=3pt] at (-3.5,0.5) {} ;
		\node[] at (-2,-0.75) {$\gamma_{1}$} ; \node[] at (-4.3,-0.75) {$\gamma_{3}$} ;
		\node[] at (-4.5,1.2) {$\gamma_{2}$} ; \node[] at (-2.4,2.1) {$\gamma_{4}$} ;
		
		\node[] at (5.5,-2.3) {$x_{i}$} ; \node[vertex, minimum size=6pt] at (5.5,-2) {} ;
		\node[] at (8,2.3) {$x_{k}$} ; \node[vertex, minimum size=6pt] at (8,2) {} ;
		\node[] at (3,1.8) {$x_{j}$} ; \node[vertex, minimum size=6pt] at (3,1.5) {} ;
		\end{tikzpicture}
		\caption{Uncrossing two neighboring edges.}
		\label{figure5}
	\end{center}
\end{figure}
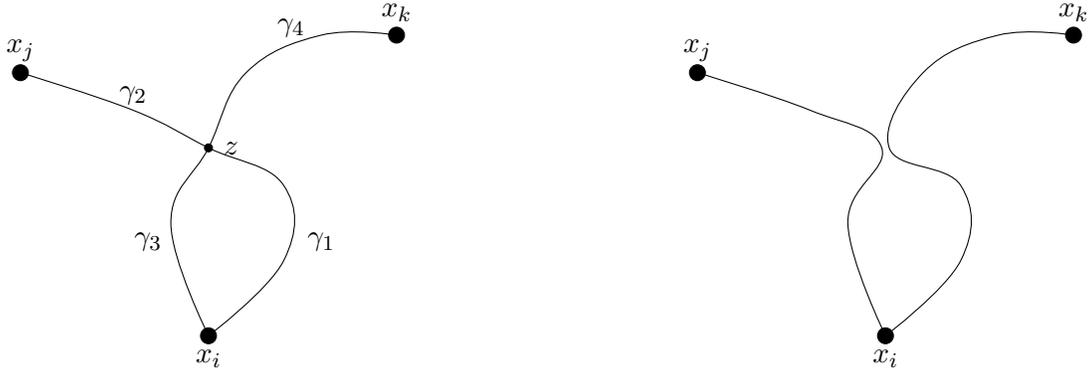

By combining Lemma \ref{lemma:separator} and Lemma \ref{lemma:K5}, it is enough to prove the following graph theoretic statement in order to prove Theorem \ref{thm:mainthm}.

\begin{theorem}\label{thm:weakK5}
 For every $\epsilon>0$ there exists $\delta>0$ such that the following holds. Let $G$ be a graph with $n$ vertices and at most $(\frac{1}{4}-\epsilon)\frac{n^{2}}{2}$ edges such that $G$ is $(4\delta,C)$-dense and $\delta$-full. Then $G$ contains a weak-subdivision of $K_{5}$.	
\end{theorem}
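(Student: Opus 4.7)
The plan is to apply Szemer\'{e}di's Regularity Lemma to $G$, reduce the problem to an extremal question on the reduced graph $R$, locate the combinatorial skeleton of a weak-subdivision of $K_{5}$ in $R$, and finally lift it back to $G$. Apply the Regularity Lemma with parameters $\epsilon'$ (regularity) and $\eta$ (density threshold for edges of $R$), both much smaller than $\epsilon$, $\delta$, and $C$, to obtain an equipartition $V_{1},\dots,V_{k}$ and a reduced graph $R$ on $[k]$ whose edges are the $\epsilon'$-regular pairs of density at least $\eta$. The three hypotheses on $G$ translate as follows: (i) the weighted pair-density sum of $R$ is at most $(\frac{1}{4}-\epsilon)\binom{k}{2}$; (ii) $\delta$-fullness implies that any two disjoint $I,J\subset [k]$ of size $\geq \delta' k$ (with $\delta'$ close to $\delta$) contain an $R$-edge between them; and (iii) $(4\delta,C)$-density forces many $R$-edges inside every set of $\geq 4\delta k$ clusters.

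The core combinatorial step is to find in $R$ five \emph{branch} clusters $B_{1},\dots,B_{5}$ and ten \emph{bridge} clusters $X_{ij}$ ($1\leq i<j\leq 5$), all $15$ distinct, satisfying: (a) each $X_{ij}$ is $R$-adjacent to both $B_{i}$ and $B_{j}$; (b) the pair $(B_{i},B_{j})$ has low $G$-density for every $i<j$, so that a pair of $G$-non-adjacent vertices $b_{i}\in B_{i},\,b_{j}\in B_{j}$ can be selected; (c) each $X_{ij}$ has low $G$-density with every $B_{k}$ for $k\notin\{i,j\}$; and (d) the pair $(X_{ij},X_{i'j'})$ has low $G$-density whenever $\{i,j\}\cap\{i',j'\}=\emptyset$. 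The branch clusters are located via a weighted Tur\'{a}n/supersaturation argument on the pair-density function: since the average pair density is less than $\frac{1}{4}$, a positive fraction of $5$-tuples of clusters have all ten pair-densities small. The bridge clusters $X_{ij}$ are then picked inside the common $R$-neighbourhoods $N_{R}(B_{i})\cap N_{R}(B_{j})$, which are large by $R$-fullness, and the roughly $25$ forbidden-adjacency constraints in (c)--(d) are enforced by a union-bound / greedy argument over ``bad'' clusters.

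Once the pattern is in $R$, lift it to $G$ by greedily picking one vertex per cluster: for each cluster, the defect form of $\epsilon'$-regularity guarantees a positive-density subset of vertices with the correct edges and non-edges to the already-chosen vertices, so a valid selection always exists. The resulting $15$-vertex induced subgraph is the desired weak-subdivision of $K_{5}$.

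The most delicate step is the combinatorial construction in $R$. The density threshold $\eta$ cannot be pushed arbitrarily low without $|E(R)|$ blowing up, so $R$ itself can still be dense, and one cannot simply pass to $\overline{R}$ and invoke Tur\'{a}n's theorem for $K_{5}$ in the obvious way. The argument must therefore be carried out in the weighted setting on the pair-density function, and satisfying all the non-adjacency constraints on the bridge clusters simultaneously forces a careful interplay of $\delta$-fullness, $(4\delta,C)$-density, and the global edge-count bound.
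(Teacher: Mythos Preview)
Your overall framework---Regularity Lemma, combinatorics on the reduced graph, then greedy embedding---matches the paper, but the combinatorial step in $R$ has a real gap, and it is exactly where the paper's argument diverges from yours.

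The problem is the bridge clusters $X_{ij}$. You assert that $N_{R}(B_{i})\cap N_{R}(B_{j})$ is large ``by $R$-fullness'', but $\delta$-fullness of $G$ gives no such thing: it guarantees a single $G$-edge between two large vertex sets, not density $\geq\eta$ between any pair of clusters, so it says nothing about $R$-adjacency. Concretely, let $G$ be the disjoint union of five cliques of size $n/5$, together with a very sparse sprinkling of cross-edges, just enough to make $G$ $\delta$-full. This $G$ satisfies all three hypotheses (about $n^{2}/10<(\tfrac14-\epsilon)\tfrac{n^{2}}{2}$ edges; $(4\delta,C)$-dense for small $C$; $\delta$-full by construction). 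Its reduced graph has five groups of clusters with density $\approx 1$ inside each group and density below $\eta$ between groups. The only viable branch clusters $B_{1},\dots,B_{5}$ lie in distinct groups, and then $N_{R}(B_{i})\cap N_{R}(B_{j})=\emptyset$ for every $i\neq j$: no bridge cluster exists at all. Your union-bound over ``bad'' clusters never gets off the ground.

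The paper avoids this by \emph{not} using separate bridge clusters. It searches for $h\leq 8$ clusters forming an $(H,\epsilon_{1})$-admissible configuration for some partial subdivision $H$ of $K_{5}$ (Lemma~\ref{lemma:admissible1}), and then the embedding (Lemma~\ref{lemma:embedding}) places the branch vertex $v_{i}'$ \emph{and} the incident side vertices $s_{i,j}'$ in the \emph{same} cluster $V_{i}$. When the pair $(V_{i},V_{j})$ has density $\geq\epsilon_{1}$, regularity yields the edge $s_{i,j}'s_{j,i}'$; when the pair is $\epsilon_{1}$-thin, $\delta$-fullness is invoked \emph{at the level of $G$, not of $R$}, to locate that single edge, and the admissibility condition $w(ia)+w(ja)<1-\epsilon_{1}$ is precisely what allows the remaining non-adjacencies to be enforced afterwards. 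In the five-clique example every pair is thin and this second mechanism is the only one available. So the repair is not a parameter tweak: you must abandon the one-vertex-per-cluster scheme and let $\delta$-fullness supply cross-cluster edges directly, which is what forces the more delicate notion of admissibility and the extremal analysis of Propositions~\ref{prop:diffweights}--\ref{prop:1/4}.
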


In the rest of this section, we prove this theorem. Let us briefly outline the proof. With the help of the Regularity lemma, we partition our graph $G$ into a constant number of sets such that the bipartite graph induced by most pairs of these sets is random like. The notion of regularity and the Regularity lemma is described in Section \ref{sect:regularity}. Given a partial subdivision $H$ of $K_{5}$, we show that an induced weak-subdivision of $H$ can be found in $G$ if we can find $|V(H)|$ parts in the partition of $G$ satisfying certain properties. This argument is presented in Section \ref{sect:embedding}. Then, we finish our proof in Section \ref{sect:admissible} by showing that a regular partition of $G$ must contain $|V(H)|$ parts with the desired properties.

\subsection{Regularity Lemma}\label{sect:regularity}

In this section, we define the notion of regularity and state the Regularity lemma.  

If $G$ is a graph and $A,B\subset V(G)$, let $$d(A,B)=\frac{|E(A,B)|}{|A||B|}.$$
 For $\lambda>0$, a pair of subsets $(A,B)$ of $V(G)$ is \emph{$\lambda$-regular}, if for every $A'\subset A$ and $B'\subset B$ satisfying $|A'|\geq \lambda |A|$ and $|B'|\geq \lambda |B|$, we have $$|d(A,B)-d(A',B')|\leq \lambda.$$ 
 A $\lambda$-regular partition of the graph $G$ is a partition $V(G)= V_{1}\cup\dots\cup V_{k}$ such that 
 \begin{itemize}
 	\item $|V_{1}|,\dots,|V_{k}|\in \{\lfloor \frac{n}{k}\rfloor,\lceil \frac{n}{k}\rceil\}$,
 	\item all but at most $\lambda k^{2}$ of the pairs $(V_{i},V_{j})$ is $\lambda$-regular.
 \end{itemize} 

\begin{reglemma}
	For every $\lambda>0$ and positive integer $m$ there exists a positive integer $M$ such that the following holds. Let $G$ be a graph, then $G$ has a $\lambda$-regular partition into $k$ parts, where $m\leq k \leq M$.
\end{reglemma}

Given a graph $G$ with $\lambda$-regular partition $(V_{1},\dots,V_{k})$, the \emph{reduced graph} of this partition is the edge-weighted graph $(R,w)$, where $w:E(R)\rightarrow [0,1]$, is defined as follows.
\begin{itemize}
	\item The vertex set of $R$ is $[k]$,
	\item $i$ and $j$ are joined by an edge if $(V_{i},V_{j})$ is $\lambda$-regular,
	\item if $ij\in E(R)$, then $w(ij)=d(V_{i},V_{j})$.
\end{itemize}

%\begin{prop}
%	Let $G$ be a graph and let $(A,B)$ be a $\lambda$-regular pair. If $A'\subset A$ and $B'\subset B$ such that $|A'|\geq \epsilon |A|$ and $|B'|\geq \epsilon |B|$, then $(A',B')$ is $\lambda$
%\end{prop}

\subsection{Embedding}\label{sect:embedding}

Let $(R,w)$ be a complete edge-weighted graph, and let $\epsilon_{1}>0$. An edge $xy\in E(R)$ is \emph{$\epsilon_{1}$-thin} if $w(xy)\leq \lambda$, and $xy$ is \emph{$\epsilon_{1}$-fat} if $w(xy)\geq 1-\epsilon_{1}$. Let $H$ be a graph with $|V(R)|$ vertices. Say that $(R,w)$ is \emph{$(H,\epsilon_{1})$-admissible} if there exists a bijection $b:V(H)\rightarrow V(R)$ and a total ordering $\prec$ of the vertices of $H$ such that
\begin{itemize}
	\item no edge of $R$ is $\epsilon_{1}$-fat,
	\item if $xy\in E(H)$ such that $x\prec y$ and $b(x)b(y)$ is $\epsilon_{1}$-thin, then $w(b(x)b(z))+w(b(y)b(z))<1-\epsilon_{1}$ holds for every $z\in V(H)\setminus\{x,y\}$ satisfying $x\prec z$.
\end{itemize}  
Say that $b$ and $\prec$ \emph{witness} that $(R,w)$ is $(H,\epsilon_{1})$-admissible if they satisfy the properties above.

This section is devoted to the proof of the following embedding lemma.

\begin{lemma}\label{lemma:embedding}
	Let $H$ be a graph with $h$ vertices, let $k$ be a positive integer and let $\alpha,\beta,\delta,\lambda,\epsilon_{1}>0$ be real numbers satisfying $\epsilon_{1}<\beta$, $\lambda<\frac{\beta}{4h}(\frac{\epsilon_{1}}{2})^{2h^{2}}$, $\delta<\frac{1}{k}(\frac{\epsilon_{1}}{2})^{2h^{2}}$, and $\alpha<\frac{1}{k}(\frac{\epsilon_{1}}{2})^{2h}$. Let $G$ be an $(\alpha,\beta)$-dense, $\delta$-full graph with a $\lambda$-regular partition $(V_{1},\dots,V_{k})$ and corresponding reduced graph $(R,w)$. If $(R,w)$ contains an $(H,\epsilon_{1})$-admissible subgraph, then $G$ contains a weak-2-subdivision of $H$ as an induced subgraph. 
\end{lemma}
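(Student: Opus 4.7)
The plan is a regularity-based greedy embedding in which the $\delta$-fullness hypothesis plays the role of a supplement to regularity precisely on the thin pairs of parts. I fix a bijection $b:V(H)\to V(R)$ and total order $\prec = (x_1\prec\cdots\prec x_h)$ witnessing $(H,\epsilon_1)$-admissibility. The branch vertex $v_{x_i}$ will be embedded in $V_{b(x_i)}$, and for every edge $e=xy\in E(H)$ with $x\prec y$ I aim to place the two side vertices $s_e\in V_{b(x)}$ and $t_e\in V_{b(y)}$ forming the required path $v_x\,s_e\,t_e\,v_y$.

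The embedding runs in a single pass through $x_1,\dots,x_h$, maintaining for each unprocessed $y$ a candidate set $C_y\subseteq V_{b(y)}$, initialized to $C_y=V_{b(y)}$. At step $i$ I first pick $v_{x_i}\in C_{x_i}$ as a vertex that is $\lambda$-typical into every other part (by $\lambda$-regularity, all but an $O(h\lambda)$-fraction of $C_{x_i}$ is typical); then, for each edge $x_jx_i\in E(H)$ with $j<i$, I select the two side vertices $s_{x_jx_i},t_{x_jx_i}$ of the now-complete path; finally, I update each $C_y$ with $y\succ x_i$ by intersecting it with the non-neighborhoods of every vertex just placed, thereby enforcing in advance all of the induced-subgraph non-adjacency constraints (branch-to-branch, branch-to-side across unrelated edges of $H$, and side-to-side across non-adjacent edges of $H$).

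The selection of the side vertices for $e=x_jx_i$ splits into two cases. If $w(b(x_j)b(x_i))\geq\epsilon_1$, the pair $(V_{b(x_j)},V_{b(x_i)})$ is $\lambda$-regular of density at least $\epsilon_1$, and a standard regularity count — combined with the $(\alpha,\beta)$-density inside the parts used to produce intra-part neighbors of $v_{x_j}$ in $V_{b(x_j)}$ and of $v_{x_i}$ in $V_{b(x_i)}$ — locates $s_e$ and $t_e$ in their current pools. If $b(x_j)b(x_i)$ is $\epsilon_1$-thin, regularity alone does not supply enough edges between the two parts, so I invoke $\delta$-fullness: the admissibility hypothesis $w(b(x_j)b(z))+w(b(x_i)b(z))<1-\epsilon_1$ for every $z\succ x_j$ is precisely what guarantees that, after imposing every required non-adjacency to the future parts, the residual pools in $V_{b(x_j)}$ and $V_{b(x_i)}$ still have size at least $\delta n$, and $\delta$-fullness then produces the edge $s_et_e$. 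The non-adjacencies $s_ev_{x_i}$ and $t_ev_{x_j}$ come for free from thinness itself.

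The main obstacle is the simultaneous bookkeeping. Each placed vertex (branch or side) imposes a non-adjacency constraint on every future part $V_{b(y)}$; there are $O(h^2)$ such constraints in total, and I must verify that the combined shrinkage leaves every candidate set with at least $\max\{\alpha n,\delta n,\lambda|V_{b(y)}|\}$ vertices so that regularity, $(\alpha,\beta)$-density, and $\delta$-fullness all remain usable throughout. The "no $\epsilon_1$-fat edge" clause bounds the cost of each single-vertex constraint by a factor of roughly $\epsilon_1-O(\lambda)$, and the thin-edge admissibility clause yields the corresponding bound for the joint constraint coming from the two side vertices of a thin edge. The quantitative hypotheses $\lambda<\tfrac{\beta}{4h}(\epsilon_1/2)^{2h^2}$, $\delta<\tfrac{1}{k}(\epsilon_1/2)^{2h^2}$, and $\alpha<\tfrac{1}{k}(\epsilon_1/2)^{2h}$ are calibrated so that after all $O(h^2)$ multiplicative losses the final pools remain large enough to close the argument; the thin-edge case, which would otherwise allow the forbidden neighborhood of $\{s_e,t_e\}$ to engulf all of $V_{b(z)}$, is exactly what the sum-density clause is designed to rule out.
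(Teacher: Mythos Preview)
Your overall strategy is right and matches the paper's: a greedy pool-based embedding where $(\alpha,\beta)$-density supplies the intra-part edges $v_x s_e$, regularity handles non-thin pairs, $\delta$-fullness handles thin pairs, and the admissibility sum clause keeps pools alive after a thin-edge joint cut. But the order you choose has a genuine gap. You place the side vertices of $x_jx_i$ at step $i$, i.e.\ you process the edges of $H$ by their \emph{larger} $\prec$-endpoint; the admissibility clause for a thin edge $x_jx_i$, however, only bounds $w(b(x_j)b(z))+w(b(x_i)b(z))$ for $z\succ x_j$, i.e.\ beyond the \emph{smaller} endpoint. Take an edge $x_ax_b\in E(H)$ disjoint from $x_jx_i$ with $a\prec x_j$ and $b\succ x_i$. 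In your schedule its side vertex in $V_{b(x_a)}$ is placed at step $b>i$, so the side-vertex pool in $V_{b(x_a)}$ must survive the removal of $N(s_e)\cup N(t_e)$ performed at step $i$. Since $a\prec x_j$, admissibility says nothing about $w(b(x_j)b(x_a))+w(b(x_i)b(x_a))$, and this sum can exceed $1$ (neither edge is fat, but each can be nearly $1-\epsilon_1$); the pool can collapse. This already bites for $H=K_5$ with $(a,j,i,b)=(1,2,3,4)$. The paper avoids the problem by separating the two phases: first embed all $h$ branch vertices (at step $s$ replacing $U_s$ by $N(v_s')\cap U_s$, which simultaneously sets up all the side-vertex pools), and only then process the edges of $H$, ordered by their \emph{smaller} endpoint. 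With that order, when a thin edge $v_av_b$ is handled every index $l<a$ is already inactive, so the only pools that must absorb the joint cut are those with $l>a$ --- exactly the range the admissibility clause covers.

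A related symptom is your third paragraph: you write that the sum clause keeps ``the residual pools in $V_{b(x_j)}$ and $V_{b(x_i)}$'' above $\delta n$. It does not; those two pools are large because every earlier single-vertex cut leaves at least an $(\epsilon_1-\lambda)$-fraction (no fat edges) and every earlier thin-edge joint cut was controlled by \emph{its own} sum clause. The sum clause attached to the current thin edge is there to protect the \emph{other} parts $V_{b(z)}$ once $s_e,t_e$ have been placed --- which is exactly what you say, correctly, in your final paragraph.
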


\begin{proof}
	Let $N=\lfloor \frac{n}{k}\rfloor$ and for simplicity, assume that $|V_{1}|=\dots=|V_{k}|=N$. Let the vertex set of $H$ be $\{v_{1},\dots,v_{h}\}$, and let $R'$ be $(H,\epsilon_{1})$-admissible subgraph of $R$. Without loss of generality, suppose that  $\{1,\dots,h\}$ is the vertex set of $R'$. Also, the bijection $b$ and ordering $\prec$ witnessing that $R'$ is $(H,\epsilon_{1})$-admissible are defined as follows:  $b(v_{i})=i$ for $i=1,\dots,h$, and $v_{1}\prec\dots\prec v_{h}$.
	
	 Let $H'$ be the 2-subdivision of $H$, and for every edge $v_{i}v_{j}\in E(H)$, let $s_{i,j}$ and $s_{j,i}$ be the two side-vertices in $H'$ 2-subdividing $v_{i}v_{j}$, where $s_{i,j}$ is joined to $v_{i}$, and $s_{j,i}$ is joined to $v_{j,i}$. 
	
	We embed the vertices of $H'$ in $G$ such that 
	\begin{itemize}
		\item for $i=1,\dots,h$, the image of $v_{i}$ is some vertex $v_{i}'\in V_{i}$, and for $v_{i}v_{j}\in E(H)$, the image of $s_{i,j}$ is some vertex $s_{i,j}'\in V_{i}$,
		\item the image of each edge of $H'$ is an edge,
		\item the image of each non-edge of $H'$ is a non-edge, with the exception that $s'_{i,j}$ and $s'_{i,j'}$ might be joined by an edge if $v_{i}v_{j},v_{i}v_{j'}\in E(H)$.
	\end{itemize}
	Clearly, such an embedding induces a weak-2-subdivision of $H$ in $G$, so our task is reduced to showing that such an embedding exists. See Figure \ref{figure2} for an illustration.

	\begin{figure}[t]
		\begin{center}
			\begin{tikzpicture}[scale=1]
			
			\draw (2,0) circle (1.2) ;
			\draw (0,2) circle (1.2) ;
			\draw (-2,0) circle (1.2) ;
			\draw (0,-2) circle (1.2) ;
			\node[] at (3.5,0) {$V_{1}$} ;
			\node[] at (0,3.5) {$V_{2}$} ;
			\node[] at (-3.5,0) {$V_{3}$} ;
			\node[] at (0,-3.5) {$V_{4}$} ;

			\node[vertex,minimum size=5pt] (A) at (2.3,0) {} ;
			\node[vertex,minimum size=5pt] (B) at (0,2.3) {} ;
			\node[vertex,minimum size=5pt] (C) at (-2.3,0) {} ;
			\node[vertex,minimum size=5pt] (D) at (0,-2.3) {} ;
			\node[] at (2.6,0) { $v_{1}'$} ;
			\node[] at (0,2.6) { $v_{2}'$} ;
			\node[] at (-2.6,0) { $v_{3}'$} ;
			\node[] at (0,-2.6) {$v_{4}'$} ;
			
			\node[vertex,minimum size=3pt] (AB) at (2,0.5) {} ; \node[] at (2.3,0.7) {$s_{1,2}'$} ;
			\node[vertex,minimum size=3pt] (AC) at (1.6,0.2) {} ; \node[] at (1.3,0.5) {$s_{1,3}'$} ;
			\node[vertex,minimum size=3pt] (AD) at (1.7,-0.5) {} ; \node[] at (2,-0.8) {$s_{1,4}'$} ;
			
			\node[vertex,minimum size=3pt] (BA) at (0.5,2) {} ; \node[] at (0.8,2.2) {$s_{2,1}'$} ;
			\node[vertex,minimum size=3pt] (BD) at (0.2,1.6) {} ; \node[] at (0.5,1.4) {$s_{2,4}'$} ;
			\node[vertex,minimum size=3pt] (BC) at (-0.5,1.7) {} ; \node[] at (-0.8,2) {$s_{2,3}'$} ;
			
			\node[vertex,minimum size=3pt] (CB) at (-2,0.5) {} ; \node[] at (-2.3,0.8) {$s_{3,2}'$} ;
			\node[vertex,minimum size=3pt] (CA) at (-1.6,0.2) {} ; \node[] at (-1.5,0.5) {$s_{3,1}'$} ;
			\node[vertex,minimum size=3pt] (CD) at (-1.7,-0.5) {} ; \node[] at (-1.9,-0.7) {$s_{3,4}'$} ;
			
			\node[vertex,minimum size=3pt] (DA) at (0.5,-2) {} ; \node[] at (0.8,-2.3) {$s_{4,1}'$} ;
			\node[vertex,minimum size=3pt] (DB) at (0.2,-1.6) {} ; \node[] at (-0.1,-1.3) {$s_{4,2}'$} ;
			\node[vertex,minimum size=3pt] (DC) at (-0.5,-1.7) {} ; \node[] at (-0.7,-2) {$s_{4,3}'$} ;
			
			\draw (A) -- (AB) -- (BA) -- (B) ;
			\draw (B) -- (BC) -- (CB) -- (C) ;
			\draw (C) -- (CD) -- (DC) -- (D) ;
			\draw (D) -- (DA) -- (AD) -- (A) ;
			\draw (A) -- (AC) -- (CA) -- (C) ;
			\draw (B) -- (BD) -- (DB) -- (D) ;
			
			\draw[red] (AB) -- (AC) -- (AD) ;
			\draw[red] (BC) -- (BA) -- (BD) -- (BC) ;
			\draw[red] (CA) -- (CD)  ;
			\draw[red] (DA) -- (DC) -- (DB) ;
			
			\end{tikzpicture}
			\caption{An embedding of a weak-2-subdivision of $K_{4}$. The edges that are part of the weak-subdivision but not the subdivision are red.}
			\label{figure2}
		\end{center}
	\end{figure}
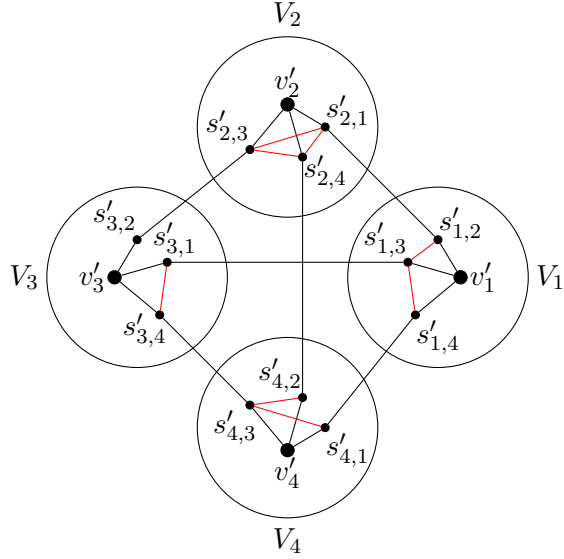

	Given a vertex $v\in V_{i}$ for some $i\in [h]$ and $h-1$ sets $U_{j}\subset V_{j}$ for $j\in [h]\setminus\{i\}$, say that \emph{$v$ is average with respect to $(U_{j})_{j\in [h]\setminus\{i\}}$} if 
	$$\left|\frac{|N(v)\cap U_{j}|}{|U_{j}|}-w(ij)\right|<\lambda$$
	for $j\in [h]\setminus\{i\}$. The following useful claim is an easy consequence of the regularity condition. 
	
	\begin{claim}\label{claim:degree}
		Let $i\in [h]$, and for $j\in [h]\setminus \{i\}$, let $U_{j}\subset V_{j}$ such that $|U_{j}|\geq \lambda N$. Then the number of vertices in $V_{i}$ which are not average with respect to $(U_{j})_{j\in[h]\setminus \{i\}}$ is at most $2h\lambda N$. 
	\end{claim}

    \begin{proof}
    	Let $W\subset V_{i}$ be the set of vertices  which are not average with respect to $(U_{j})_{j\in[h]\setminus \{i\}}$. If $|W|\geq 2h\lambda N$, then there exists $j\in [h]\setminus \{i\}$ and $W'\subset W$ such that $|W'|\geq \lambda N$, and either $\frac{|N(v)\cap U_{j}|}{|U_{j}|}>w(ij)+\lambda$ for every $v\in W'$ or $\frac{|N(v)\cap U_{j}|}{|U_{j}|}<w(ij)-\lambda$ for every $v\in W'$. In the first case $d(W',U_{j})>d(V_{i},V_{j})+\lambda$, in the second case $d(W',U_{j})<d(V_{i},V_{j})-\lambda$, both contradicting that $(V_{i},V_{j})$ is a $\lambda$-regular pair.
    \end{proof}
    
    For $i=1,\dots,h$, set $U_{i}:=V_{i}$. We embed the vertices of $H'$ in $G$ step-by-step while updating the sets $U_{1},\dots,U_{h}$ at the end of each step. During this procedure, say that an index $i\in [h]$ is \emph{active}, if either $v_{i}'$ or $s_{i,j}'$ is not defined yet for some $v_{i}v_{j}\in E(H)$. During our procedure, the following properties are satisfied:
    \begin{itemize}
    	\item at each step, we embed either one branch-vertex or two side-vertices connected by an edge,
    	\item when a set is updated, it is replaced with one of its subsets,
    	\item at the end of step $s$, $|U_{i}|\geq (\frac{\epsilon_{1}}{2})^{2s}N$ for $i\in [h]$,
    	\item at the end of each step, if $v\in V(H')$ is already embedded in $G$ and its image is $v'\in V_{i}$, then for every active index $j\in [h]\setminus i$ we have  $N(v')\cap U_{j}=\emptyset$.
    \end{itemize}
     In the first $h$ steps, we embed the vertices $v_{1},\dots,v_{h}$ one-by-one. More precisely, for $s=1,\dots,h$, at step $s$ we define the image $v'_{s}\in V_{s}$ of $v_{s}$ and update the sets $U_{1},\dots,U_{h}$ such that in addition to the properties above, the following condition is also satisfied:
     \begin{itemize}
     	\item at the end of step $s$, we have $U_{s}\subset N(v'_{s})$.
     \end{itemize}
     
    We proceed at step $s$ as follows. In the beginning of step $s$, we have $|U_{i}|\geq (\frac{\epsilon_{1}}{2})^{2s-2}N$ for $i\in [h]$. As $G$ is $(\alpha,\beta)$-dense and $|U_{s}|\geq (\frac{\epsilon_{1}}{2})^{2h}N\geq \alpha n$, we have $d(G[U_{s}])\geq \beta$. Let $W$ be the set of vertices in $U_{s}$ whose degree in $G[U_{s}]$ is at least $\beta|U_{s}|$. Then 
    $$\beta |U_{s}|^{2}\leq E(G[U_{s}])\leq  \frac{1}{2}(|W||U_{s}|+\beta |U_{s}|^{2}),$$
    so $|W|\geq \frac{\beta}{2}|U_{s}|\geq \frac{\beta}{2}(\frac{\epsilon_{1}}{2})^{2h}N$. As $|W|> 2h\lambda N$, there exists $w\in W$ such that $w$ is average with respect to $(U_{i})_{i\in [h]\setminus \{s\}}$. Note that for $i\in [h]\setminus \{s\}$, we have
    $$|U_{i}\setminus N(w)|=|U_{i}|-|U_{i}\cap N(w)|\geq |U_{i}|-|U_{i}|(w(ij)+\lambda)\geq |U_{i}|(\epsilon_{1}-\lambda)>\frac{\epsilon_{1}}{2}|U_{i}|> \left(\frac{\epsilon_{1}}{2}\right)^{2s}N,$$
    where the second inequality holds noting that $ij$ is not $\epsilon_{1}$-fat. Also,
    $$|U_{s}\cap N(w)|\geq \beta|U_{s}|\geq \epsilon_{1}|U_{s}|>\left(\frac{\epsilon_{1}}{2}\right)^{2s}N.$$
   
    Set $v'_{s}=w$, and make the following updates: $U_{i}:=U_{i}\setminus N(w)$ for $i\in [h]\setminus \{s\}$ and $U_{s}:=N(w)\cap U_{s}$. This finishes step $s$.
    
    Let $f_{1},\dots,f_{l}$ be an enumeration of the edges of $H$ such that if $1\leq i<j\leq l$, and $f_{i}=v_{a}v_{b}$, $f_{j}=v_{a'}v_{b'}$, where $a<b$, $a'<b'$, then $a\leq a'$. Now for $s=1,\dots,l$, we embed the two side-vertices on the $2$-subdivision of the edge $f_{s}$ in step $h+s$. 
    
    We proceed at step $h+s$ as follows. In the beginning of the step, we have $|U_{i}|\geq \left(\frac{\epsilon_{1}}{2}\right)^{2(h+s)-2}N$. Let $f_{s}=v_{a}v_{b}$, where $a<b$. Note that by the ordering of the edges, the active indices at this step are $i\in \{a,a+1,\dots,h\}$. Consider two cases.
    \begin{description}
    	\item[Case 1.] $w(ab)\geq \epsilon_{1}$. 
    	
    	As $|U_{a}|>(\frac{\epsilon_{1}}{2})^{2h^{2}}N>2h\lambda N$, there exists a vertex $u\in U_{a}$ such that $u$ is average with respect to $(U_{i})_{i\in [h]\setminus \{a\}}$. For $i\in [h]\setminus \{a\}$, let $U_{i}'=U_{i}\setminus N(u)$, and let $U_{a}'=U_{a}$. Then, by similar calculations as before, we have $|U_{i}'|\geq \frac{\epsilon_{1}}{2}|U_{i}|$. Also, let $W=U_{b}\cap N(u)$, then
    	$$|W|=|U_{b}\cap N(u)|\geq |U_{b}|(w(ab)-\lambda)\geq  |U_{b}|(\epsilon_{1}-\lambda)\geq \frac{\epsilon_{1}}{2}|U_{b}|.$$
    	But then $|W|> 2h\lambda N$, so there exists $w\in W$ such that $w$ is average with respect to $(U'_{i})_{i\in [h]\setminus \{b\}}$. But then for $i\in [h]\setminus \{b\}$, by the same calculations as before, we have 
    	$$|U_{i}'\setminus N(w)|\geq \frac{\epsilon_{1}}{2}|U_{i}'|\geq \left(\frac{\epsilon_{1}}{2}\right)^{2}|U_{i}|\geq \left(\frac{\epsilon_{1}}{2}\right)^{2(h+s)}N.$$
    	Set $s_{a,b}'=u$, $s_{b,a}'=w$, and make the following updates: $U_{i}:=U_{i}'\setminus N(w)$ for $i\in [h]\setminus \{b\}$ and $U_{b}:=U_{b}'$. This finishes step $h+s$.
    	
    	\item[Case 2.]$w(ab)<\epsilon_{1}$.
    	
    	Let $W_{a}$ be the set of elements in $U_{a}$ that are average with respect to $(U_{i})_{i\in [h]\setminus \{a\}}$, and let $W_{b}$ be the set of elements in $U_{b}$ that are average with respect to $(U_{i})_{i\in [h]\setminus \{b\}}$. Then $|W_{a}|\geq |U_{a}|-2h\lambda N\geq \frac{1}{2}|U_{a}|$, and similarly $|W_{b}|\geq \frac{1}{2}|U_{b}|$. But then $|W_{a}|,|W_{b}|>(\frac{\epsilon_{1}}{2})^{2h^{2}}N>\delta n$ and $G$ is $\delta$-full, so there exists an edge between $W_{a}$ and $W_{b}$. Let $w_{a}\in W_{a}$ and $w_{b}\in W_{b}$ be the endpoints of such an edge. For every $i\in \{a+1,\dots,h\}\setminus \{b\}$, let $U_{i}'=U_{i}\setminus (N(w_{a})\cup N(w_{b}))$. Then
    	$$|U_{i}'|=|U_{i}|-|N(w_{a})\cup N(w_{b})|\geq |U_{i}|-|N(w_{a})|-|N(w_{b})|\geq |U_{i}|-(w(ai)+\lambda)|U_{i}|-(w(bi)+\lambda)|U_{i}|.$$
    	Note that we have $w(ai)+w(bi)\leq 1-\epsilon_{1}$ as $R'$ is $(H,\epsilon_{1})$-admissible and $a<i$. Hence,
    	$$|U_{i}'|\geq |U_{i}|(\epsilon_{1}-2\lambda)\geq \frac{\epsilon_{1}}{2}|U_{i}|\geq \left(\frac{\epsilon_{1}}{2}\right)^{2(h+s)}N.$$
    	Set $s_{a,b}'=w_{a}$, $s_{b,a}'=w_{b}$, and make the following updates: $U_{i}:=U_{i}'$ if $i\in \{a+1,\dots,h\}\setminus \{b\}$, $U_{a}:=U_{a}\setminus N(w_{b})$, $U_{b}:=U_{b}\setminus N(w_{a})$, and do not update the sets $U_{1},\dots,U_{a-1}$. This finishes step $h+s$.
    \end{description}
 
    It is easy to check that the end of the procedure we get the desired embedding of $H'$ in $G$.
\end{proof}

\subsection{Finding an admissible subgraph}\label{sect:admissible}

Given an edge weighted graph $(R,w)$, the \emph{weight} of $R$ is defined as $w(R)=\sum_{f\in E(R)}w(f)$. The aim of this section is to prove the following lemma.

\begin{lemma}\label{lemma:admissible1}
	Let $\mathcal{H}$ be the family of partial subdivisions of $K_{5}$ with at most $8$ vertices. For every $\epsilon_{2}>0$ there exist $k_{0}=k_{0}(\epsilon_{2})\in \mathbb{N}^{+}$ and $\epsilon_{1}=\epsilon_{1}(\epsilon_{2})>0$ such that the following holds for every $k\geq k_{0}$. Let $(R,w)$ be an edge weighted graph with $k$ vertices, at least $(1-\epsilon_{1})\frac{k^{2}}{2}$ edges such that $w(R)\leq (\frac{1}{4}-\epsilon_{2})\frac{k^{2}}{2}$. Then there exists $H\in \mathcal{H}$ such that $R$ contains an $(H,\epsilon_{1})$-admissible subgraph. 
\end{lemma}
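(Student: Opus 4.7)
The plan is to exhibit a subset $S\subseteq V(R)$ of between 5 and 8 vertices together with some $H\in\mathcal{H}$ on $|S|$ vertices such that the induced weighted subgraph $R[S]$ is $(H,\epsilon_1)$-admissible. Five of these vertices will serve as the branch vertices (the copies of $V(K_5)$), while up to three further ``side'' vertices will subdivide problematic edges of $K_5$.

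First, I would locate the branch vertices. Every $\epsilon_1$-fat edge contributes at least $1-\epsilon_1$ to $w(R)$, so the number of fat edges is bounded by $w(R)/(1-\epsilon_1)\le \frac{1/4-\epsilon_2}{1-\epsilon_1}\cdot\frac{k^2}{2}$. Choosing $\epsilon_1$ small relative to $\epsilon_2$, this is strictly less than $(1/4-\epsilon_2/2)\binom{k}{2}$; combined with the hypothesis $|E(R)|\ge(1-\epsilon_1)\binom{k}{2}$, the non-fat subgraph $R^{*}$ of $R$ has strictly more than $\tfrac{3}{4}\binom{k}{2}+\Omega(\epsilon_2 k^2)$ edges. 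Tur\'an's theorem for $K_5$ then produces five vertices $u_1,\dots,u_5$ with no pairwise $\epsilon_1$-fat edge in $R$.

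Next, classify the $\binom{5}{2}=10$ pairs on $\{u_1,\dots,u_5\}$ as \emph{thin} (weight $\le \epsilon_1$) or \emph{medium} (weight in $(\epsilon_1,1-\epsilon_1)$). If all ten pairs are medium, or all ten are thin, then $H=K_5$ is admissible for any ordering: in the first case the sum condition is vacuous, in the second each sum $w(u_iu_l)+w(u_ju_l)\le 2\epsilon_1<1-\epsilon_1$. In the mixed regime, I would first try to reorder $u_1,\dots,u_5$ so that for every thin pair the sum condition is trivially satisfied (for example, by placing a thin pair as the last two vertices of the ordering, so that no later vertex $z$ remains to be checked). Call a thin pair \emph{bad} if no ordering can make all sum-condition violations disappear; for each bad pair I would subdivide the corresponding edge of $K_5$ in $H$ by inserting one side vertex, mapped to some $w\in V(R)\setminus\{u_1,\dots,u_5\}$ whose edges to both endpoints are non-fat (and when possible non-thin, so that the new edges introduce no further sum conditions). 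The existence of such $w$ follows because $R^{*}$ is dense enough that any two vertices share a common non-fat neighborhood of linear size.

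The main obstacle is bounding the number of bad pairs by $3$, so that the resulting $H$ has at most $5+3=8$ vertices and lies in $\mathcal{H}$. This is where the weight slack $\epsilon_2$ beneath the extremal density $\tfrac14$ (corresponding to the $4$-partite ``fat clique'' configuration, which is also the extremal configuration for the main theorem) must be used in an essential way. I would argue, either by minimizing the number of bad pairs over all copies of $K_5$ in $R^{*}$, or by an averaging argument over random $K_5$-subgraphs of $R^{*}$, that four or more bad pairs would concentrate so much weight on the 5-set or its link neighborhoods in $R$ that the global budget $w(R)\le(\tfrac14-\epsilon_2)\tfrac{k^2}{2}$ would be violated. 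Once the branch vertices and (at most three) side vertices are fixed, the ordering $\prec$ is chosen so that the branch vertices come first (with any remaining non-bad thin pair placed last) and the side vertices come at the end, after which one routinely checks both the non-fat condition and the sum condition for every edge of $H$.
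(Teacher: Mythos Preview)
Your Tur\'an step producing five pairwise non-fat vertices is fine, but the argument collapses at exactly the point you flag as the ``main obstacle'': bounding the number of bad thin pairs by $3$. The hand-wave (``four or more bad pairs would concentrate so much weight \ldots'') does not work as stated. Badness of a pair $u_iu_j$ is witnessed by a single third vertex $u_\ell$ with $w(u_iu_\ell)+w(u_ju_\ell)\ge 1-\epsilon_1$; the weight this forces is entirely local to the $5$-set and says nothing about the global budget $w(R)\le(\tfrac14-\epsilon_2)\tfrac{k^2}{2}$. There is no averaging mechanism linking ``every non-fat $K_5$ has $\ge 4$ bad pairs'' to a lower bound on $w(R)$, and indeed the extremal configurations that the paper eventually isolates (a bounded number of fat cliques with $\{0,\tfrac12\}$-weights between them) show that the interaction between thin edges and their ``witness'' vertices is delicate and not governed by a simple counting argument. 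There are also two secondary problems: (i) the side vertex you insert for a bad pair may itself be joined to its endpoints by thin edges, creating new sum conditions you have not controlled; and (ii) placing side vertices at the \emph{end} of $\prec$ makes them eligible as the vertex $z$ for every surviving thin branch edge, so the ordering you propose does not neutralise the non-bad thin pairs either (in the paper's constructions the side vertices are placed \emph{first}).

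The paper's route is entirely different and avoids picking a particular $K_5$. It first reduces (Lemma~\ref{lemma:admissible2}) to complete graphs with weights in $[0,1]$ and $(H,0)$-admissibility, then runs a variational/extremal argument: take a weight function of minimum total weight with no admissible subgraph, show (Proposition~\ref{prop:diffweights}) that all weights can be pushed to $\{0,\tfrac12,1\}$, then (Proposition~\ref{prop:partition}) that the vertices split into fat cliques with uniform $\{0,\tfrac12\}$ weights between them. This collapses the problem to a vertex-weighted auxiliary graph $Q$ on at most $7$ vertices, and a finite case analysis (Proposition~\ref{prop:1/4}) shows $\phi(Q)\ge\tfrac14$, contradicting the weight hypothesis. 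In short, the paper replaces your local ``find and repair a $K_5$'' strategy by a global Tur\'an-type minimisation; the missing idea in your proposal is precisely this reduction to a bounded-size structured object on which the $\tfrac14$ bound can be verified by hand.
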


Given the edge weighted graph $(R,w)$, let $R_{0}$ be the \emph{complete} graph on $V(R)$, and define the weight function $w_{0}:E(R_{0})\rightarrow [0,1]$ as follows. For every $f\in E(R_{0})$, let 
$$w_{0}(f)=\begin{cases}
\max(w(f)+\epsilon_{1},1) &\mbox{ if } f\in E(R)\mbox{ and }w(f)> \epsilon_{1},\\
0 &\mbox{ if }f\in E(R)\mbox{ and }w(f)\leq \epsilon_{1},\\
1 &\mbox{ if }f\not\in E(R).
\end{cases}$$
Then it is easy to see that for every graph $H$, an $(H,0)$-admissible subgraph of $(R_{0},w_{0})$ is $(H,\epsilon_{1})$-admissible in $(R,w)$. Also, if $|E(R)|\geq (1-\epsilon_{1})\frac{|V(R)|^{2}}{2}$, then $|w(R)-w_{0}(R_{0})|<2\epsilon_{1}|V(R)|^{2}$. Therefore, in order to prove Lemma \ref{lemma:admissible1}, it is enough to prove the following lemma. 

\begin{lemma}\label{lemma:admissible2}
		Let $\mathcal{H}$ be the family of partial subdivisions of $K_{5}$ with at most $8$ vertices. For every $\epsilon_{3}>0$ there exist $k_{1}=k_{1}(\epsilon_{3})\in \mathbb{N}^{+}$ such that the following holds for every $k\geq k_{1}$. Let $(R,w)$ be a complete edge weighted graph with $k$ vertices such that $w(R)\leq (\frac{1}{4}-\epsilon_{3})\frac{k^{2}}{2}$. Then there exists $H\in \mathcal{H}$ such that $R$ contains an $(H,0)$-admissible subgraph.
\end{lemma}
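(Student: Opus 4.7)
My plan is to construct the $(H,0)$-admissible subgraph in two stages: choose five branch-vertices of $R$, then add at most three side-vertices by $2$-subdividing offending edges of $K_5$. Let $G_1\subseteq\binom{V(R)}{2}$ denote the pairs of weight $1$ (i.e.\ the $0$-fat edges). Since $|E(G_1)|\le w(R)\le(\tfrac{1}{4}-\epsilon_3)\tfrac{k^2}{2}$, for $k\ge k_1(\epsilon_3)$ a short computation shows that $|E(\overline{G_1})|$ strictly exceeds $\ex(k,K_5)\approx\tfrac{3}{4}\binom{k}{2}$, so Tur\'an's theorem yields a $K_5$ in $\overline{G_1}$: five vertices $v_1,\dots,v_5$ with no pairwise weight $1$. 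Taking these as branch-vertices makes the ``no $0$-fat edge'' clause of $(H,0)$-admissibility automatic, regardless of which $H\in\mathcal{H}$ we ultimately use.

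I then attempt $H=K_5$. Fix an ordering of the $v_i$'s---for instance, by decreasing weighted degree. The remaining requirement is the thin-triangle condition: for every weight-$0$ edge $v_iv_j$ with $v_i\prec v_j$ and every $v_k\succ v_i$, $v_k\ne v_j$, one needs $w(v_iv_k)+w(v_jv_k)<1$. Call $v_iv_j$ \emph{bad} (for the chosen ordering) if some such $v_k$ violates this inequality. If no edge is bad, $H=K_5$ is $(K_5,0)$-admissible and we are done. Otherwise, for each bad edge $v_iv_j$ I insert a side-vertex $s_{ij}\in V(R)\setminus\{v_1,\dots,v_5\}$ satisfying $w(v_is_{ij}),w(v_js_{ij})\in(0,1)$, replacing $v_iv_j$ by the $2$-subdivided path $v_i$--$s_{ij}$--$v_j$. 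A union-bound/averaging argument on $V(R)$ produces such an $s_{ij}$, and lets us choose it so that no new thin-triangle obligation is created by it on any remaining thin edge of $H$. After this surgery, $v_iv_j\notin E(H)$, and the two new edges, being neither $0$-thin nor $0$-fat, carry no further constraints.

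The main obstacle is bounding the number of $2$-subdivisions by $3$, as required by $|V(H)|\le 8$. I expect this to follow from a stability-type dichotomy exploiting the $\epsilon_3$-slack. Either the thin graph $G_0$ of weight-$0$ edges is already dense enough (above the Tur\'an threshold $\tfrac{3}{4}\binom{k}{2}$) to contain a $K_5$ outright---furnishing a $(K_5,0)$-admissible configuration with no bad edges at all---or $G_0$ is sparse, in which case $R$ concentrates its weight into a nearly four-partite pattern resembling the extremal four-clique configuration; a K\H{o}v\'ari--S\'os--Tur\'an-type double count of bad triangles $(v_i,v_j,v_k)$ (where $v_iv_j$ is weight-$0$ and $w(v_iv_k)+w(v_jv_k)\ge 1$) then forces $w(R)$ up toward the extremal value $\tfrac{1}{4}\cdot\tfrac{k^2}{2}$, contradicting the strict hypothesis $w(R)\le(\tfrac14-\epsilon_3)\tfrac{k^2}{2}$ once $k$ is large. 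Turning this dichotomy into a quantitative guarantee---that some specific choice of the five branch-vertices and some ordering leave at most three bad edges---is the step I expect to be the hardest.
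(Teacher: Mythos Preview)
Your proposal is a plan rather than a proof, and the decisive step---bounding the number of required subdivisions by three---is left essentially as a conjecture. The dichotomy you sketch does not carry the weight you place on it. In the ``sparse $G_0$'' branch you assert that a K\H{o}v\'ari--S\'os--Tur\'an-type count of bad triangles forces $w(R)$ up toward $\tfrac{1}{4}\cdot\tfrac{k^2}{2}$, but you give no mechanism connecting the global density of weight-$0$ edges to the number of bad edges on one particular $5$-tuple and one particular ordering. (Note also that $|E(G_0)|$ small only means that few edges have weight exactly $0$; it does not by itself say that $R$ is close to the four-clique pattern.) Even granting the bad-edge bound, your side-vertex step is not justified: for each bad pair $\{v_i,v_j\}$ you need a vertex $s$ with $w(v_is),w(v_js)\in(0,1)$ and $w(s\,v_\ell)<1$ for every branch-vertex $v_\ell$ and every previously chosen side-vertex. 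A union bound does not produce this in general, since a single $v_\ell$ can have as many as $k-1$ weight-$1$ neighbours, so the forbidden set may already exhaust $V(R)$.

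The paper takes an entirely different route and sidesteps both difficulties. It never tries to locate a $5$-tuple directly. Instead it argues by contrapositive on an extremal $(R,w)$ (minimizing $w(R)$ among complete weighted graphs with no $(H,0)$-admissible subgraph for any $H\in\mathcal{H}$). A symmetrization on the set of ``dangerous triples'' shows one may assume all weights lie in $\{0,\tfrac12,1\}$; a Tur\'an-style vertex-replacement argument then shows the weight-$1$ edges partition $V(R)$ into cliques $X_1,\dots,X_s$ with constant weight ($0$ or $\tfrac12$) between parts. This collapses the problem to a vertex-weighted quotient graph $(Q,\phi)$ on $s$ vertices, and the core of the proof is a finite case analysis (first $s<8$, then each $s\le 7$ separately) showing $\phi(Q)\ge\tfrac14$. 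So the admissible subgraph is never constructed explicitly; its existence falls out of the lower bound on the extremal weight.
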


\begin{proof}[Proof of Lemma \ref{lemma:admissible1} assuming Lemma \ref{lemma:admissible2}]
 Let $\epsilon_{3}=\frac{\epsilon_{2}}{2}$ and let $k_{1}=k_{1}(\epsilon_{3})$ be the constant given by Lemma \ref{lemma:admissible2}. We show that the choice $k_{0}=k_{1}$ and $\epsilon_{1}=\frac{\epsilon_{2}}{8}$ suffices. Let $(R,w)$ be an  edge weighted graph with $k\geq k_{0}$ vertices, at least $(1-\epsilon_{1})\frac{k^{2}}{2}$ edges such that $w(R)\leq (\frac{1}{4}-\epsilon_{2})\frac{k^{2}}{2}$, and define the complete edge weighted graph $(R_{0},w_{0})$ as above. Then $w_{0}(R_{0})\leq w(R)+2\epsilon_{1}k^{2}\leq (\frac{1}{4}-\epsilon_{3})\frac{k^{2}}{2}.$ Hence, $(R_{0},w_{0})$ contains an $(H,0)$-admissible subgraph for some $H\in \mathcal{H}$. But then this subgraph is also $(H,\epsilon_{1})$-admissible in $(R,w)$.
\end{proof}

As a reminder, a complete subgraph $R'$ of the complete edge-weighted graph $(R,w)$ is $(H,0)$-admissible for some graph $H$ if $|V(R')|=|V(H)|$, and there exist a bijection $b:V(H)\rightarrow V(R')$ and a total ordering $\prec$ of the vertices of $H$ such that 
\begin{itemize}
	\item $R'$ contains no edge of weight $1$,
	\item if $xy\in E(H)$ such that $x\prec y$ and $w(b(x)b(y))=0$, then $w(b(x)b(y))+w(b(x)b(z))<1$ holds for every $z\in V(H)\setminus \{x,y\}$ satisfying $x\prec y$.
\end{itemize}

In what comes, fix a positive integer $k$ and a finite family of graphs $\mathcal{H}$. Let $(R,w)$ be a \emph{complete} edge-weighted graph on $k$ vertices that does not contain an $(H,0)$-admissible subgraph for any $H\in \mathcal{H}$. Also, among every such weighted graph, choose the one that minimizes the weight $w(R)$, and among the ones that minimize the weight, it minimizes the size of the set $$F_{w}=\{w(f):f\in E(R)\}\setminus\left\{0,\frac{1}{2},1\right\}.$$
Note that the set of weight functions $w:E(R)\rightarrow [0,1]$ for which $(R,w)$ does not contain an $(H,0)$-admissible subgraph for any $H\in \mathcal{H}$ is a compact subset of $\mathbb{R}^{E(R)}$, so $w(R)$ attains its minimum on this set.  We show that $(R,w)$ must have a simple structure, reminiscent of the extremal graphs given by the classical Tur\'an's theorem \cite{T41} not containing a clique of given size. To show this, we proceed by following a similar train of thoughts as one of the traditional proofs of Tur\'an's theorem.

\begin{prop}\label{prop:diffweights}
  $F_{w}=\emptyset.$
\end{prop}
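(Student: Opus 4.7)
I would proceed by contradiction. Suppose $F_w\neq\emptyset$ and fix an edge $e\in E(R)$ with $w(e)=c\in(0,1)\setminus\{1/2\}$. The objective is to construct a weight function $w'$ on $R$ with no $(H,0)$-admissible subgraph for any $H\in\mathcal{H}$, yet with either $w'(R)<w(R)$ or $w'(R)=w(R)$ and $|F_{w'}|<|F_w|$, contradicting the lexicographic extremality of $(R,w)$.

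The first step is to extract a tight sum constraint from the primary minimality. Consider the perturbation $w_\delta$ agreeing with $w$ except that $w_\delta(e)=c-\delta$. Since $w_\delta(R)<w(R)$, minimality of $w(R)$ forces $(R,w_\delta)$ to contain some admissible subgraph for every sufficiently small $\delta>0$; because $\mathcal{H}$, the candidate induced subgraphs, the bijections, and the orderings are all finite, a single witness $(H,R^*,b,\prec)$ works on some interval $\delta\in(0,\delta_0)$. Passing from $w$ to $w_\delta$ changes only admissibility constraints involving $e$, and the ``no edge of weight $1$'' condition is untouched since $w(e)<1$. Hence every constraint of $(H,R^*,b,\prec)$ that fails in $(R,w)$ must be a tight sum constraint $w(b(u)b(z))+w(b(v)b(z))=1$ (for some thin $H$-edge $uv$ with $u\prec v$ and some $z\succ u$) in which $e$ is one of the summed edges. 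Consequently there is a partner edge $e'\neq e$, sharing a vertex with $e$, with $w(e')=1-c$.

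Next I would exploit this pairing to balance the weights on $e$ and $e'$. Consider the family $w_t$ defined by $w_t(e)=c+t$ and $w_t(e')=(1-c)-t$, with all other weights fixed. Along this family $w_t(R)=w(R)$ and the critical sum remains equal to $1$, so $(H,R^*,b,\prec)$ is still not admissible in $(R,w_t)$. Choosing $t=1/2-c$ gives $w_t(e)=w_t(e')=1/2$, and the nonstandard values $c$ and $1-c$ are no longer attained by these two edges. Applying the same balanced shift simultaneously to every pair of edges with weights $c$ and $1-c$ (using Step~1 to produce a matching between them), one would obtain $w'(R)=w(R)$ and $|F_{w'}|\leq |F_w|-2$, contradicting the secondary minimality of $|F_w|$, provided no new admissible subgraph appears.

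The main obstacle is exactly this last proviso. A candidate $(H',R',b',\prec')$ whose only failing constraint in $(R,w)$ is a tight sum $w(e')+w(f)=1$ involving some other weight-$c$ edge $f\neq e$ could become admissible, since $w_t(e')+w(f)=(1-c-t)+c$ drops below $1$. The fix is to shift \emph{all} weight-$c$ edges by $+t$ and all weight-$(1-c)$ edges by $-t$ simultaneously, so that every tight sum pairing a weight $c$ with a weight $1-c$ remains equal to $1$. The combinatorial heart of the proof is then to show, using Step~1 applied to each weight-$c$ edge and to each weight-$(1-c)$ edge, that the tight constraints arrange themselves coherently so that this global balanced shift preserves all failing constraints; if some pairing is incomplete, one iterates the reduction, eliminating the complementary pair $\{c,1-c\}$ from $F_w$ one at a time until only the standard weights $0,\tfrac{1}{2},1$ remain.
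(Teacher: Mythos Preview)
Your Step 1 is a valid observation: decreasing $w(e)$ must create some admissible witness, and since only sums involving $e$ are affected, there is a partner edge $e'$ with $w(e')=1-c$. But the rest of the plan has two concrete gaps that are not addressed.

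First, the global shift ``all weight-$c$ edges to $1/2$, all weight-$(1-c)$ edges to $1/2$'' changes the total weight by $(\tfrac12-c)\bigl(|F(c)|-|F(1-c)|\bigr)$, where $F(r)=\{f:w(f)=r\}$. Step~1 does \emph{not} give a matching between $F(c)$ and $F(1-c)$ (distinct weight-$c$ edges may have the same partner), so there is no reason for these counts to agree. If $c<\tfrac12$ and $|F(c)|>|F(1-c)|$ the total weight strictly increases, and you get no contradiction with the lexicographic minimality.

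Second, even if the weight were controlled, the shift all the way to $1/2$ can destroy blocking constraints that involve \emph{other} values in $F_w$. If $c<\tfrac12$ and some blocking sum reads $w(e')+w(f)\ge 1$ with $w(e')=1-c$ and $w(f)=c'$ for some $c'\in F_w\cap(c,\tfrac12)$, then after your shift the sum becomes $\tfrac12+c'<1$, so that witness may become admissible. You only treat the case $w(f)=c$, but the constraint merely forces $w(f)\ge c$.

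The paper's argument sidesteps both problems by not aiming at $1/2$. It first notes that $(H,0)$-admissibility is determined by the weight-$1$ edges together with the set $D_w$ of \emph{dangerous triples} $(x,y,z)$ with $w(xy)=0$ and $w(xz)+w(yz)\ge 1$. It then shifts all weight-$r$ edges to the \emph{next lower threshold} $q$ (the largest value below $r$ with $q\in F_w$ or $1-q\in F_w$, defaulting to $0$ or $\tfrac12$), and if $1-r\in F_w$ it symmetrically sends $1-r\to 1-q$, after arranging $|F(r)|\ge|F(1-r)|$ so that the weight does not increase. Because one never jumps over another weight value or its complement, every sum that was $\ge 1$ stays $\ge 1$, so $D_w\subset D_{w'}$ and no new admissible subgraph appears. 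This threshold-by-threshold step is exactly the missing ingredient in your sketch.
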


\begin{proof}
	 Say that a triple of vertices $(x,y,z)$ in $(R,w)$ is \emph{dangerous} if $w(xy)=0$ and $w(xz)+w(yz)\geq 1$, and let $D_{w}$ be the set of dangerous triples. We show that if $F_{w}$ is nonempty, then we can replace the weight function $w$ with $w'$ such that $w'(R)\leq w(R)$, $|F_{w'}|<|F_{w}|$ and $D_{w}\subset D_{w'}$. But whether a subgraph of $(R,w)$ is $(H,0)$-admissible depends only on the set of dangerous triples, so if $(R,w)$ is not $(H,0)$-admissible for any $H\in\mathcal{H}$, then neither is $(R,w')$.
	 
	For $r\in [0,1]$, let $F(r)=\{f\in E(R):w(f)=r\}$. Suppose that $F_{w}$ is nonempty and let $r\in F_{w}$. Consider two cases.
    \begin{description}
    	\item[Case 1.]$1-r\not\in F_{w}$. If $r<\frac{1}{2}$, let $q$ be the largest real number smaller than $r$ such that  $q\in F_{w}$ or $1-q\in F_{w}$; if there exists no such $q$, let $q=0$. If $r>\frac{1}{2}$, let $q$ be the largest real number smaller than $r$ but not less than $\frac{1}{2}$ such that  $q\in F_{w}$ or $1-q\in F_{w}$; if there exists no such $q$, let $q=\frac{1}{2}$. Define the new weight function $w'$ as 
    	$$w'(f)=\begin{cases}q &\mbox{ if }f\in F(r),\\ 
    	w(f)&\mbox{ if }f\in E(R)\setminus F(r).\end{cases}$$
    	Then $w'(R)=w(R)-(r-q)|F(r)|<w(R)$, $F_{w'}=F_{w}\setminus\{r\}$ and $D_{w}\subset D_{w'}$.
    	
    	\item[Case 2.] $1-r\in F_{w}$. Without loss of generality, we can suppose that $|F(r)|\geq |F(1-r)|$. Again, if $r<\frac{1}{2}$, let $q$ be the largest real number smaller than $r$ such that  $q\in F_{w}$ or $1-q\in F_{w}$; if there exists no such $q$, let $q=0$. If $r>\frac{1}{2}$, let $q$ be the largest real number smaller than $r$ but not less than $\frac{1}{2}$ such that  $q\in F_{w}$ or $1-q\in F_{w}$; if there exists no such $q$, let $q=\frac{1}{2}$. Define the new weight function $w'$ as 
    	$$w'(f)=\begin{cases}q &\mbox{ if }f\in F(r),\\ 
    	1-q &\mbox{ if }f\in F(1-r),\\
    	w(f)&\mbox{ if }f\in E(R)\setminus (F(r)\cup F(1-r)).\end{cases}$$ 
    	Then $w'(R)=w(R)+(q-r)(|F(r)|-|F(1-r)|)\leq w(R)$, $F_{w'}=F_{w}\setminus\{r,1-r\}$ and $D_{w}\subset D_{w'}$.
    \end{description}

\end{proof}

\begin{prop}\label{prop:partition}
	Among the complete edge weighted graphs $(R,w)$ on $k$ vertices, where $w:E(R)\rightarrow \{0,\frac{1}{2},1\}$ and $(R,w)$ has no $(H,0)$-admissible subgraph for any $H\in \mathcal{H}$, there is one with minimum weight $w(R)$ satisfying the following properties. The vertex set of $R$ can be partitioned into some parts $X_{1},\dots,X_{s}$ such that for $i\in [s]$, every edge in $X_{i}$ has weight $1$, and for every $1\leq i<j\leq s$, either every edge between $X_{i}$ and $X_{j}$ has weight $\frac{1}{2}$, or every edge between $X_{i}$ and $X_{j}$ has weight $0$.
\end{prop}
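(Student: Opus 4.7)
The plan is to argue via a Zykov-style symmetrization, in the spirit of the proof of Proposition~\ref{prop:diffweights}. Among all complete edge-weighted graphs $(R,w)$ on $k$ vertices with $w:E(R)\to\{0,1/2,1\}$, no $(H,0)$-admissible subgraph for any $H\in\mathcal{H}$, and minimum weight $w(R)$, I will fix one that further minimizes the tie-breaking quantity
\[
\Phi(R,w)=\bigl|\bigl\{\{u,v\}\subset V(R):w(uv)=1 \text{ and } \exists\,z\neq u,v\text{ with } w(uz)\neq w(vz)\bigr\}\bigr|,
\]
the number of ``inconsistent'' weight-$1$ pairs. The goal is to prove $\Phi(R,w)=0$; the desired partition structure then falls out immediately.

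Suppose for contradiction $\Phi(R,w)>0$ and fix an inconsistent pair $\{u,v\}$ with $w(uv)=1$. Writing $d_w(x)=\sum_{y\neq x}w(xy)$, assume WLOG that $d_w(u)\leq d_w(v)$ and define $w'$ by cloning $u$'s neighborhood onto $v$: put $w'(vz)=w(uz)$ for every $z\neq u,v$, and leave all other edges (in particular $w'(uv)=1$) untouched. Then $w'(R)=w(R)+d_w(u)-d_w(v)\leq w(R)$, and crucially $(R,w')$ remains admissible-free: for any purported $(H,0)$-admissible $R'$ in $(R,w')$, the case $v\notin V(R')$ gives $R'$ admissible in $(R,w)$ directly; the case $v\in V(R')$, $u\notin V(R')$ is resolved by swapping $v$ for $u$ in $R'$ (the relevant weights match by construction); and the case $u,v\in V(R')$ is excluded, since then $R'$ would contain the edge $uv$ of weight $w'(uv)=1$, violating the no-weight-$1$-edge condition of admissibility. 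Minimality of $w(R)$ then forces $d_w(u)=d_w(v)$ and hence $w'(R)=w(R)$.

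The final step is to show $\Phi(R,w')<\Phi(R,w)$, contradicting the minimality of $\Phi$. The pair $\{u,v\}$ itself becomes consistent under $w'$ and drops out of $\Phi$, and a short case-analysis shows that each inconsistent pair $\{v,x\}$ in $(R,w')$ can be injected into an inconsistent pair $\{u,x\}$ in $(R,w)$. The delicate point --- and the main obstacle of the proof --- is controlling pairs $\{a,b\}$ disjoint from $\{u,v\}$: the ``$z=v$'' entry of their disagreement condition switches from $[w(av)\neq w(bv)]$ to $[w(ua)\neq w(ub)]$, so new inconsistencies may in principle appear there. I intend to overcome this by strengthening the tie-breaker into a lexicographic combination --- for instance, first minimizing $w(R)$, then a global disagreement measure such as $\sum_{\{u,v\}}|\{z:w(uz)\neq w(vz)\}|$, and then $\Phi$ --- chosen so that the Zykov move strictly improves the refined measure.

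Once $\Phi(R,w)=0$, the partition structure is immediate. The relation $u\sim v\iff w(uv)=1$ is then an equivalence relation on $V(R)$: if $w(uv)=w(vw)=1$, applying $\Phi(R,w)=0$ to $\{u,v\}$ gives $w(uz)=w(vz)$ for every $z\neq u,v$, so in particular $w(uw)=w(vw)=1$. Let $X_1,\dots,X_s$ be the $\sim$-classes. All edges inside each $X_i$ have weight $1$ by definition, and for $i\neq j$, any $u,u'\in X_i$ and $v\in X_j$, the profile-agreement $w(uz)=w(u'z)$ for every $z\neq u,u'$ forces $w(uv)=w(u'v)$, so every edge between $X_i$ and $X_j$ has the same weight $c_{ij}$. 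Finally $c_{ij}\neq 1$, for otherwise $X_i$ and $X_j$ would merge into a single $\sim$-class.
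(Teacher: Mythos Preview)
Your proposal is explicitly incomplete: you flag a ``main obstacle'' (pairs $\{a,b\}$ disjoint from $\{u,v\}$) and say you ``intend to overcome this'' via an unspecified lexicographic refinement. That is a genuine gap. Moreover, the obstacle you flag is in fact \emph{not} an obstacle, while the real difficulty lies elsewhere. Suppose $\{a,b\}$ is a consistent weight-$1$ pair disjoint from $\{u,v\}$ in $(R,w)$, so $w(az)=w(bz)$ for every $z\neq a,b$; in particular $w(au)=w(bu)$. In $(R,w')$ only the $z=v$ entry can change, and there $w'(av)=w(au)=w(bu)=w'(bv)$, so $\{a,b\}$ stays consistent. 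Hence the disjoint contribution to $\Phi$ never increases.

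The actual subtlety is in the pairs that \emph{do} meet $\{u,v\}$. After cloning $u$ onto $v$, a pair $\{v,x\}$ is weight-$1$ in $w'$ exactly when $w(ux)=1$, and it is inconsistent exactly when there exists $z\neq u,v,x$ with $w(uz)\neq w(xz)$; call this set of $x$'s $A'$. The same computation shows that the inconsistent weight-$1$ pairs $\{u,x\}$ in $w'$ are \emph{also} indexed by $A'$. So the $(u\text{ or }v)$-contribution to $\Phi(w')$ is $2|A'|$, while in $\Phi(w)$ it is $|A|+|B|+1$ (with $A'\subseteq A$ the $u$-side and $B$ the $v$-side). Your injection $\{v,x\}_{w'}\mapsto\{u,x\}_{w}$ collides with the natural map $\{u,x\}_{w'}\mapsto\{u,x\}_{w}$, and one can have $2|A'|>|A|+|B|+1$ for a single direction of cloning. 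The fix is simple once you see it: when $d_w(u)=d_w(v)$, you are free to clone in either direction, and since $A'\subseteq A$, $B'\subseteq B$ imply $|A'|+|B'|\leq |A|+|B|$, at least one direction gives $2|A'|\leq |A|+|B|$ (or $2|B'|\leq |A|+|B|$), hence a strict drop in $\Phi$ via the $\{u,v\}$ term. With this correction, your one-shot tie-breaker approach goes through and no lexicographic refinement is needed. The paper instead proceeds in two steps: first it proves that $w(xy)=1$ is transitive by a Zykov move that, in the tie case, clones $y$ onto \emph{both} $x$ and $z$ at once, gaining a strict weight decrease from $w(xz)<1$; then, having the classes $X_1,\dots,X_s$, it homogenizes the inter-class weights via further cloning operators $C_i$. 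Both routes are Zykov-style; yours, once repaired, merges the two steps by building the inter-class uniformity directly into the tie-breaker.
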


\begin{proof}
	Let $(R,w)$ be a complete edge weighted graph satisfying the desired conditions of the proposition. For $x\in V(R)$, let $d_{w}(x)=\sum_{y\in V(R)\setminus \{x\}} w(xy)$. For $x,y\in V(R)$, let $x\sim y$ if $w(xy)=1$. We show that $\sim$ is an equivalence relation. Suppose not, then there exists $x,y,z\in V(R)$ such that $x\sim y$, $y\sim z$ but $x\not\sim z$. Consider two cases.
	
	\begin{description}
		\item[Case 1.] Either $d_{w}(y)>d_{w}(x)$ or $d_{w}(y)>d_{w}(z)$.
		
		Suppose that $d_{w}(y)>d_{w}(x)$, the other case being similar. Define the new weight function
		$$w'(f)=\begin{cases}w(xu) &\mbox{ if }f=yu\mbox{ for some }u\in V(R),\\ 
		w(f) &\mbox{ otherwise.}\end{cases}$$ 
		Then $w'(R)=w(R)+d_{w}(x)-d_{w}(y)<w(R)$ and $(R,w')$ does not contain an $(H,0)$-admissible subgraph for any $H\in\mathcal{H}$. Indeed, $(R,w)$ and $(R',w)$ differ only in the edges containing $y$. Hence, if $(R,w')$ contains an $(H,0)$-admissible subgraph $R'$, then $y\in V(R')$. But as $w(xy)=1$, we must have $x\not\in V(R')$. The neighborhoods of $x$ and $y$ are identical in $(R,w')$, so then $R'\setminus\{y\}\cup\{x\}$ is also $(H,0)$-admissible in both $(R,w)$ and $(R,w')$, contradiction.
		
		\item[Case 2.]  $d_{w}(y)\leq d_{w}(x)$ and $d_{w}(y)\leq d_{w}(z)$.
		
		Define the new weight function
			$$w'(f)=\begin{cases}w(yu) &\mbox{ if }f=xu\mbox{ or }f=zu\mbox{ for some }u\in V(R),\\ 
		w(f) &\mbox{ otherwise.}\end{cases}$$ 
		
			Then $w'(R)=w(R)+(2d_{w}(y)-1)-(d_{w}(x)+d_{w}(z)-w(xz))<w(R)$. Also, by a similar argument as in the previous case,  $(R,w')$ does not contain an $(H,0)$-admissible subgraph for any $H\in\mathcal{H}$
	\end{description}
	
	Both cases contradict the minimality of $w(R)$, so $\sim$ is an equivalence relation. Let $X_{1},\dots,X_{s}$ be the equivalence classes of $\sim$. 
	
	Now consider the following operation on weight functions. Let $w_{0}:E(R)\rightarrow \mathbb{R}^{+}$ be a weight function such that $w_{0}(xy)=1$ for every $x,y\in X_{i}$, $i=1,\dots,s$. Define the weight function $C_{i}w_{0}$ on $E(R)$ as follows. Let $x\in X_{i}$ be a vertex such that $d_{w_{0}}(x)$ is minimal among the vertices in $X_{i}$. Then for $f\in E(R)$, let 
	$$C_{i}w_{0}(f)=\begin{cases}w_{0}(xz) &\mbox{ if }f=yz\mbox{ for some }y\in X_{i},u\in V(R),\\ 
	w_{0}(f) &\mbox{ otherwise.}\end{cases}$$ 
	
	Then $C_{i}w_{0}(R)=w_{0}(R)+|X_{i}|d_{w_{0}}(x)-\sum_{y\in X_{i}}d_{w_{0}}(y)\leq w_{0}(R)$. Also, for every graph $H$, if $(R,w_{0})$ does not have an $(H,0)$-admissible subgraph, then $(R,C_{i}w_{0})$ also does not contain an $(H,0)$-admissible subgraph. This holds by the same argument as in Case 1. above.
	
	Let $w'=C_{s}C_{s-1}\dots C_{1}w$. Then $w'(R)\leq w(R)$ and $(R,w')$ does not contain an $(H,0)$-admissible subgraph for any $H\in\mathcal{H}$. Moreover, for any $1\leq i<j\leq s$, either every edge between $X_{i}$ and $X_{j}$ has weight $0$, or every edge between $X_{i}$ and $X_{j}$ has weight $\frac{1}{2}$. Indeed, it is easy to prove by induction on $l$ that if $w_{l}=C_{l}C_{l-1}\dots C_{1}w$, then for every pair $(i,u)$, where  $1\leq i\leq l$ and $u\in X_{j}$ for some $j\in [s]\setminus \{i\}$, either for every $v\in X_{i}$ we have $w_{l}(uv)=0$, or for every $v\in X_{i}$ we have $w_{l}(uv)=\frac{1}{2}$. But then $w'=w_{s}$ has the desired properties.
\end{proof}

In what comes, suppose that $(R,w)$ has the form described in Proposition \ref{prop:partition}. Define the \emph{vertex}-weighted graph $(Q,\phi)$, where $\phi:V(Q)\rightarrow [0,1]$, as follows. Let $X_{1},\dots,X_{s}$ be the partition of $V(R)$ given by Proposition \ref{prop:partition}. Then the vertex set of $Q$ is $[s]$, the weight of $a\in [s]$ is $\phi(a)=\frac{|X_{a}|}{k}$, and $ab$ is an edge of $Q$ if every edge of $R$ between $X_{a}$ and $X_{b}$ has weight $\frac{1}{2}$. Note that 
$$\sum_{a\in [s]}\phi(a)=1.$$
 We define the \emph{weight} of the graph $Q$ in the following unconventional way: 
$$\phi(Q)=\sum_{a\in [s]} \phi(a)^{2}+\sum_{ab\in E(Q)}\phi(a)\phi(b).$$
Note that with this definition, we have
\begin{equation}\label{equ1}
w(R)=\sum_{xy\in E(R)}w(xy)=\sum_{i=1}^{s}\binom{|X_{i}|}{2}+\sum_{ab\in E(Q)}\frac{1}{2}|X_{a}||X_{b}|=\frac{k^{2}}{2}\left(\phi(Q)-\frac{1}{k}\right).
\end{equation}

Let $H$ be a graph and let $Q'$ be an induced subgraph of $Q$ on $|V(H)|$ vertices. Say that $Q'$ is \emph{$H$-admissible} if there exists a bijection $b:V(H)\rightarrow V(Q')$ and a total ordering $\prec$ of the vertices of $H$ such that
\begin{itemize}
\item if $xy\in E(H)$ such that $x\prec y$ and $b(x)b(y)\not\in E(Q')$, then for every $z\in V(H)\setminus\{x,y\}$ satisfying $x\prec z$, either $b(x)b(z)\not\in E(Q')$ or $b(y)b(z)\not\in E(Q')$   
\end{itemize}

Say that $b$ and $\prec$ \emph{witness} that $Q'$ is $H$-admissible if they satisfy the properties above. Note that $Q$ contains an $H$-admissible subgraph if and only if $(R,w)$ contains an $(H,0)$-admissible subgraph. Hence, $Q$ does not contain an $H$-admissible subgraph for any $H\in\mathcal{H}$. The following proposition is finishing touch in the proof of Lemma \ref{lemma:admissible2}.

\begin{prop}\label{prop:1/4}
	Let $\mathcal{H}$ be the family of partial subdivisions of $K_{5}$ with at most 8 vertices. If $(Q,\phi)$ has no $H$-admissible subgraph for any $H\in \mathcal{H}$, then $\phi(Q)\geq \frac{1}{4}.$
\end{prop}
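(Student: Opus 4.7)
My plan is to prove the contrapositive: assume $(Q,\phi)$ has no $H$-admissible subgraph for any $H\in\mathcal{H}$, and deduce $\phi(Q)\geq \tfrac{1}{4}$. Restricting to the support I may assume $\phi(a)>0$ for every $a\in V(Q)$; if $|V(Q)|\leq 4$ then $\phi(Q)\geq \sum_a \phi(a)^2\geq 1/|V(Q)|\geq \tfrac{1}{4}$ directly, so from now on $|V(Q)|\geq 5$. The first step is to show $\alpha(Q)\leq 3$: given an independent quadruple $\{u_1,u_2,u_3,u_4\}$ and any fifth vertex $v$, consider $Q[\{v,u_1,\ldots,u_4\}]$ under the ordering $v\prec u_1\prec u_2\prec u_3\prec u_4$ (identified with a bijection from $V(K_5)$). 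A non-edge $xy$ fails the $K_5$-admissibility condition only through a later common neighbor $z$ which would lie in the independent set $\{u_1,\ldots,u_4\}\setminus\{y\}$; in every such case $xz$ is a non-edge, so the condition holds vacuously and $Q[\{v,u_1,\ldots,u_4\}]$ is $K_5$-admissible, a contradiction. The analogous argument applied to an induced $K_5$ yields $\omega(Q)\leq 4$ as well.

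Next, since $\omega(\overline{Q})=\alpha(Q)\leq 3$, the Motzkin--Straus theorem gives $\sum_{ab\in E(\overline{Q})}\phi(a)\phi(b)\leq \tfrac{1}{2}(1-\tfrac{1}{3})=\tfrac{1}{3}$. Combined with the identity
\[
\phi(Q)=\frac{1+\sum_a\phi(a)^2}{2}-\sum_{ab\in E(\overline{Q})}\phi(a)\phi(b),
\]
this gives $\phi(Q)\geq \tfrac{1}{6}+\tfrac{1}{2}\sum_a\phi(a)^2\geq \tfrac{1}{6}+\tfrac{1}{2|V(Q)|}\geq \tfrac{1}{4}$ whenever $|V(Q)|\leq 6$. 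So I may assume $|V(Q)|\geq 7$, and the strategy then is to build an explicit $H$-admissible subgraph for some $H\in\mathcal{H}$. The key local move is: if one can pick five ``branch'' vertices $w_1,\ldots,w_5$ whose induced subgraph has at most three non-edges $M$, and for each non-edge $\{w_i,w_j\}\in M$ a distinct common $Q$-neighbor $s_{ij}\in V(Q)\setminus\{w_1,\ldots,w_5\}$, then the subgraph induced on $\{w_1,\ldots,w_5\}\cup\{s_{ij}:\{i,j\}\in M\}$ is $H$-admissible for the partial subdivision $H$ of $K_5$ that $1$-subdivides exactly the edges in $M$. Indeed $|V(H)|=5+|M|\leq 8$, and \emph{every} edge of $H$ -- the non-subdivided $K_5$-edges (edges of $Q$ by choice of $M$) together with the path edges $w_is_{ij}$ and $s_{ij}w_j$ (edges of $Q$ by choice of $s_{ij}$) -- is already realized in $Q$, so the admissibility condition never activates, irrespective of the ordering.

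The main obstacle is the combinatorial case analysis showing that either such a configuration exists or $\phi(Q)\geq \tfrac{1}{4}$ is forced anyway. Fixing an independent triple $I=\{u_1,u_2,u_3\}$ (one exists by $\alpha(Q)=3$) and partitioning $V(Q)\setminus I$ by the nonempty type $N(v)\cap I\subseteq I$, I would run through the possible adjacency patterns between these classes and either extract a 5-tuple meeting the conditions above, or -- when that fails -- fall back on partial subdivisions that use $2$ or $3$ side-vertices on a single $K_5$-edge, or keep some non-edges as direct $H$-edges (certifying the $K_5$-admissibility condition for them via a suitably chosen ordering). All resulting $H$ still satisfy $|V(H)|\leq 8$ and thus belong to $\mathcal{H}$. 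The delicate technical content is showing that the simultaneous failure of all these constructions forces $\phi$ to concentrate on a small set of vertices (essentially on a small clique cover of $\overline{Q}$), which sharpens the Motzkin--Straus bound above to the required threshold $\phi(Q)\geq \tfrac{1}{4}$.
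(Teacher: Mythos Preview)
Your proposal has a genuinely different and attractive opening. The computation of $\alpha(Q)\leq 3$ is correct, and the Motzkin--Straus step
\[
\phi(Q)=\frac{1+\sum_a\phi(a)^2}{2}-\sum_{ab\in E(\overline{Q})}\phi(a)\phi(b)\geq \frac{1}{6}+\frac{1}{2s}
\]
cleanly dispatches all $s\leq 6$ in one stroke, replacing the paper's ad hoc inequalities for $s=5,6$. Your ``key local move'' (five branch vertices with $\leq 3$ missing edges, each patched by a distinct outside common neighbour) is also correct: since every edge of the partial subdivision $H$ lands on a $Q$-edge, the admissibility condition is vacuous.

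However, what you have written is not a proof of the proposition, because the case $s\geq 7$ is only a programme. You yourself flag the ``delicate technical content'' and the ``combinatorial case analysis'' as still to be done. Two concrete gaps:
\begin{itemize}
\item You never bound $s$ from above. The paper proves $s<8$ by a nontrivial argument (finding a path between two designated vertices in a $5$-vertex piece and assembling a $K_5^{(p)}$-admissible subgraph with $p\leq 3$); without such a bound your case analysis is over infinitely many graphs.
\item For $s=7$ your Motzkin--Straus bound gives only $\phi(Q)\geq \tfrac{1}{6}+\tfrac{1}{14}=\tfrac{5}{21}<\tfrac{1}{4}$, so the weight argument alone is insufficient. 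Moreover, there \emph{do} exist $7$-vertex graphs $Q$ with no $H$-admissible subgraph for any $H\in\mathcal{H}$ (the paper handles one such family by a direct weight estimate in its Appendix), so you cannot hope to always produce an admissible subgraph when $s=7$; a mixed structural/weight analysis is unavoidable there. Your sketch (``partition $V(Q)\setminus I$ by neighbourhood type in $I$ and run through adjacency patterns'') is plausible but is exactly the hard part of the proof, and none of it is carried out.
\end{itemize}
In short: keep the Motzkin--Straus shortcut for $s\leq 6$, but to finish you must actually execute the $s\geq 7$ analysis. The paper does this by first proving $s\leq 7$ and then devoting its Appendix to an exhaustive treatment of $s=7$; your outline would need comparable work to become a proof.
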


\begin{proof} In order to make the proof of this proposition more transparent, we collect a few simple observations about $Q$.
	 
	 \begin{obs}\label{obs:basic}
	 	Any subset of 5 vertices of $Q$ contains two neighboring edges and two disjoint edges.
	 \end{obs}
 
     \begin{proof}
     	Let $A$ be a 5 element subset of $V(Q)$. If $Q[A]$ does not contain $2$ neighboring edges, then any ordering $\prec$ of the vertices of $K_{5}$ and any bijection $b:V(K_{5})\rightarrow A$ witness that $Q[A]$ is $K_{5}$-admissible.
     	
     	Now suppose that $A$ does not contain two disjoint edges. Then either $Q[A]$ is the union of a triangle and two isolated vertices, or $Q[A]$ is the union of a star and some isolated vertices. In the first case, any ordering $\prec$ of the vertices of $K_{5}$ and any bijection $b:V(K_{5})\rightarrow A$ witness that $Q[A]$ is $K_{5}$-admissible. In the second case, let $1\prec 2\prec 3\prec 4\prec 5$ be the vertices of $K_{5}$, and let $v\in A$ be the unique vertex with degree at least two. Then any bijection $b:V(K_{5})\rightarrow A$ satisfying $b(1)=v$ with the ordering $\prec$ witness that $Q[A]$ is $K_{5}$-admissible.
     \end{proof}
	
	\begin{obs}\label{obs:K23}
		$V(Q)$ does not contain two vertices $a,b$ and a set $A\subset V(Q)\setminus\{a,b\}$ such that $|A|=3$, and either
		
		\begin{enumerate}
			\item there are no edges between $\{a,b\}$ and $A$,
			\item there are no edges between $a$ and $A$, and $b$ is joined to every vertex of $A$,
			\item $ab$ is an edge, and $a$ and $b$ are joined to every element of $A$.
		\end{enumerate} 
	\end{obs}
    \begin{proof}
    	Let the vertices of $K_{5}$ be $1\prec 2\prec 3\prec 4\prec 5$. Suppose that $Q$ contains $a,b,A$ with one of the described properties, and let $A=\{x_{1},x_{2},x_{3}\}$. If $Q[A]$ contains exactly one non-edge, suppose that it is $x_{2}x_{3}$. Define the bijection $b:V(K_{5})\rightarrow A\cup\{a,b\}$ as follows. Let $b(1)=b,b(2)=a,b(3)=x_{1},b(4)=x_{2},b(5)=x_{3}$. Then $b$ and $\prec$ witness that $Q[A\cup\{a,b\}]$ is $K_{5}$-admissible.
    \end{proof}

    \begin{obs}\label{obs:K4}
    	Let $v\in V(Q)$ and $A\subset V(Q)\setminus \{v\}$ such that $|A|=4$ and either $v$ is joined to every element of $A$, or $v$ is joined to no element of $A$. Then $Q[A]$ is either a cycle of length 4 or a path of length 3.
    \end{obs}

    \begin{proof}
    	It is easy to check that $Q[A]$ is $K_{4}$-admissible, unless $Q[A]$ is a cycle of length 4 or a path of length 3. Let $1\prec 2\prec 3\prec 4$ be the vertices of $K_{4}$ and let $b:K_{4}\rightarrow A$ be a bijection such that $\prec$ and $b$ witness that $Q[A]$ is $K_{4}$-admissible. Then extending $\prec$ to $V(K_{5})$ as $5\prec 1\prec 2\prec 3\prec 4$ and the bijection $b$ as $b(5)=v$, $b$ and $\prec$ witness that $Q[A\cup \{v\}]$ is $K_{5}$-admissible. 
    \end{proof}

    \begin{obs}\label{obs:deg5}
    	If $s\geq 7$, the complement of $Q$ has maximum degree $4$.
    \end{obs}

    \begin{proof}
    	Assume that there exists $a\in V(Q)$ and $B\subset V(Q)\setminus\{v\}$ such that $|B|=5$ and there are no edges between $a$ and $B$. Let $b\in V(Q)\setminus (B\cup \{a\})$. Then there exists $A\subset B$ such that $|A|=3$ and either $b$ is joined to every element of $A$, or there are no edges between $b$ and $A$. This contradicts either 1. or 2. in Observation \ref{obs:K23}.
    \end{proof}

Now we show that $Q$ cannot have many vertices.

  \begin{claim}\label{claim:s<8}
  	$s<8$
  \end{claim} 

\begin{proof}
	Suppose that $s=8$. By Observation \ref{obs:deg5}, the minimum degree of $Q$ is at least $3$. First, suppose that there exists $v\in V(Q)$ of degree 3, and let $A$ be the set of 4 vertices not joined to $v$. Then by Observation \ref{obs:K4}, $Q[A]$ is either a path of length 3, or a cycle of length 4. In both cases, there are two non-edges $ab$ in $Q[A]$ for which there exists $c\in A\setminus \{a,b\}$ such that $ac,bc\in E(Q)$. Fix one of these non-edges $ab$, and let $B=(V(Q)\setminus (A\cup \{v\}))\cup \{a,b\}$. Then $|B|=5$, and as the minimum degree of $Q$ is 3, both $a$ and $b$ has degree at least $1$ in $Q[B]$. 
	
	We claim that there exists a path between $a$ and $b$ in $Q[B]$. Indeed, suppose not and let $C$ be the connected component of $Q[B]$ containing $a$. Then $|C|\geq 2$ and $|B\setminus C|\geq 2$ as both $a$ and $b$ has degree at least $1$. But then $\{|C|,|B\setminus C|\}=\{2,3\}$, and there are no edges between $B$ and $C$, which contradicts 1. in Observation \ref{obs:K23}. See Figure \ref{figure3}.
	
	Let $a=x_{0},x_{1},\dots,x_{p+1}=b$ be the vertices of a path connecting $a$ and $b$ in $B$, and set $D=A\cup \{v,x_{1},\dots,x_{p}\}$. Also, let $K_{5}^{(p)}$ be the partial subdivision of $K_{5}$ in which one edge is $p$-subdivided. We show that $D$ is $K_{5}^{(p)}$-admissible. Let $1,2,3,4,5$ be the branch-vertices of $K_{5}^{(p)}$, and let $r_{1},\dots,r_{p}$ be the vertices $p$-subdividing the edge $23$. Let $c,d$ be the two vertices in $A\setminus \{a,b\}$. Consider the vertex ordering on $K_{5}^{(p)}$ defined as $r_{1}\prec\dots\prec r_{p}\prec 1\prec 2\prec 3\prec 4\prec 5$, and define the bijection $b$ as $b(r_{i})=x_{i}$ for $i=1,\dots,p$, $b(1)=v, b(2)=a, b(3)=b, b(4)=c, b(5)=d$. Then $\prec$ and $b$ witness that $Q[D]$ is $K_{5}^{(p)}$-admissible.
	
	Now we can suppose that every vertex of $Q$ has degree at least $4$. Let $v\in V(Q)$ be an arbitrary vertex and let $A\subset N(v)$ such that $|A|=4$. Again, by Observation \ref{obs:K4}, we have that $Q[A]$ is either a cycle of length 4 or a path of length 3. Let $B=(V(Q)\setminus (A\cup \{v\}))\cup \{a,b\}$. Then $|B|=5$, and as the minimum degree of $Q$ is 4, both $a$ and $b$ have degree at least $1$ in $Q[B]$. But then we can repeat the exact same argument as before to find a $K_{5}^{(p)}$-admissible subgraph for some $p\leq 3$. This finishes the proof of the claim.
\end{proof}

	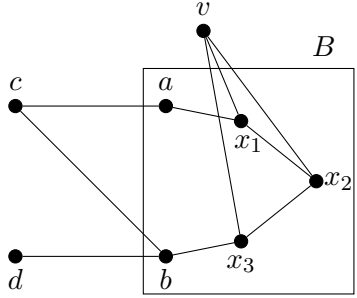
\begin{figure}[t]
	\begin{center}
		\begin{tikzpicture}[scale=1]

		\node[vertex,minimum size=5pt] (v) at (0,2) {} ; \node[] at (0,2.3) {$v$} ;
		\node[vertex,minimum size=5pt] (a) at (-0.5,1) {} ; \node[] at (-0.5,1.3) {$a$} ;
		\node[vertex,minimum size=5pt] (c) at (-2.5,1) {} ; \node[] at (-2.5,1.3) {$c$} ;
		\node[vertex,minimum size=5pt] (b) at (-0.5,-1) {} ; \node[] at (-0.5,-1.3) {$b$} ;
		\node[vertex,minimum size=5pt] (d) at (-2.5,-1) {} ; \node[] at (-2.5,-1.3) {$d$} ;
		
		\node[vertex,minimum size=5pt] (x1) at (0.5,0.8) {} ; \node[] at (0.6,0.5) {$x_{1}$} ;
		\node[vertex,minimum size=5pt] (x2) at (1.5,0) {} ; \node[] at (1.8,0) {$x_{2}$} ;
		\node[vertex,minimum size=5pt] (x3) at (0.5,-0.8) {} ; \node[] at (0.5,-1.1) {$x_{3}$} ;
		
		\draw (a) -- (c) -- (b) -- (d) ;
		\draw (v) -- (x1) ; \draw (v) -- (x2) ; \draw (v) -- (x3) ;
		\draw (a) -- (x1) -- (x2) -- (x3) -- (b) ;
		
		\draw (-0.8,-1.5) rectangle (2,1.5) ;
		\node[] at (1.6,1.8) {$B$} ;

		\end{tikzpicture}
		\caption{An illustration for the proof of Claim \ref{claim:s<8}.}
		\label{figure3}
	\end{center}
\end{figure}
 
 Therefore, in order to finish the proof of Proposition \ref{prop:1/4}, it is enough to consider the cases $s=1,\dots,7$. The following simple inequality will help us to deal with these few cases.
 
 \begin{claim}\label{claim:inequality}
 	Let $0\leq a\leq b\leq c\leq d$. Then
 	$$ad+bc\leq ac+bd\leq ab+cd.$$
 \end{claim}

\begin{proof}
	The first inequality is equivalent to $(b-a)(d-c)\geq 0$, while the second inequality is equivalent to $(c-a)(d-b)\geq 0$.
\end{proof}

Without loss of generality, suppose that $\phi(1)\leq \phi(2)\leq\dots\leq \phi(s)$. Now let us consider the different possible values of $s$. For every value of $s$, we show that $\phi(Q)$ can be lower bounded by the sum of the squares of 4 numbers whose sum is equal to 1. But then $\phi(Q)\geq \frac{1}{4}$.

   \begin{description}
   	\item[$\mathbf{s\leq 4}$.] In this case, we simply have$$\phi(Q)\geq \sum_{i=1}^{s}\phi(i)^{2}\geq \frac{1}{s}\geq \frac{1}{4}.$$
   	\item[$\mathbf{s=5}$.] If $Q$ is not $K_{5}$-admissible, then $Q$ has at least two edges by Observation \ref{obs:basic}. But then
   	$$\phi(Q)\geq \sum_{i=1}^{5}\phi(i)^{2}+2\phi(1)\phi(2)=(\phi(1)+\phi(2))^{2}+\phi(3)^{2}+\phi(4)^{2}+\phi(5)^{2}\geq \frac{1}{4}.$$
   	\item[$\mathbf{s=6}$.] By Observation \ref{obs:basic}, there are 3 vertices $a,b,c\in \{2,3,4,5,6\}$ such that $ab,ac\in E(Q)$. But then $\phi(a)\phi(b)+\phi(a)\phi(c)\geq \phi(2)\phi(3)+\phi(2)\phi(4)$. Also, there are two disjoint edges $a_{1}b_{1},a_{2}b_{2}$ in $Q[[6]\setminus\{a\}]$. But then by Claim \ref{claim:inequality}, we have $\phi(a_{1})\phi(b_{1})+\phi(a_{2})\phi(b_{2})\geq \phi(1)\phi(4)+\phi(2)\phi(3)$. As $ab,ac,a_{1}b_{1},a_{2}b_{2}$ are four distinct edges of $Q$, we can write
   	\begin{align*}
   	\phi(Q)&\geq \sum_{i=1}^{6}\phi(i)^{2}+\phi(2)\phi(3)+\phi(2)\phi(4)+\phi(1)\phi(4)+\phi(2)\phi(3)\\
   	       &\geq (\phi(1)+\phi(4))^{2}+(\phi(2)+\phi(3))^{2}+\phi(5)^{2}+\phi(6)^{2}\\
   	       &\geq\frac{1}{4}.
    \end{align*}    
    \item[$\mathbf{s=7}$.] The analysis of this case is quite tedious, so we postpone it until the Appendix.
	\end{description}
\end{proof}

Now we are ready to conclude the proof of Lemma \ref{lemma:admissible2}, and therefore the proof of Lemma \ref{lemma:admissible1}.
\begin{proof}[Proof of Lemma \ref{lemma:admissible2}]
	The choice $k_{1}=\lceil \frac{1}{\epsilon_{3}}\rceil+1$ suffices. Indeed, we proved that if $(R,w)$ does not contain an $(H,0)$-admissible subgraph for any $H\in \mathcal{H}$, then $\phi(Q)\geq \frac{1}{4}$. But by (\ref{equ1}), we have 
	$$w(R)=\frac{k^{2}}{2}\left(\phi(Q)-\frac{1}{k}\right)>\left(\frac{1}{4}-\epsilon_{3}\right)\frac{k^{2}}{2},$$
	 finishing the proof.
\end{proof}

\subsection{Proof of Theorem \ref{thm:weakK5}}

In this section, we put everything together to finish the proof of Theorem \ref{thm:weakK5}.

\begin{proof}[Proof of Theorem \ref{thm:weakK5}.]
	Let $\mathcal{H}$ be the family of partial subdivisions of $K_{5}$ with at most $8$ vertices. As a reminder, $C$ is the constant given by Lemma \ref{lemma:separator}.
	
	Let $\epsilon_{2}=\epsilon$, $\epsilon_{1}=\min\{\frac{C}{2},\epsilon_{1}(\epsilon_{2})\}$ and $k_{0}=k_{0}(\epsilon_{2})$, where $\epsilon_{1}(\epsilon_{2})$ and $k_{0}(\epsilon_{2})$ are the constants given by Lemma \ref{lemma:admissible1}. Let $\beta=C$, and $\lambda=\frac{\beta}{8h}(\frac{\epsilon_{1}}{2})^{128}$. By the Regularity lemma, there exists $M=M(k_{0},\lambda)$ such that $G$ has a $\lambda$-regular partition $(V_{1},\dots,V_{k})$, where $k_{0}\leq k\leq M$. Let $(R,w)$ be the corresponding reduced graph. Then $|E(R)|\geq \binom{k}{2}-\lambda k^{2}> (1-\epsilon_{1})\frac{k^{2}}{2}.$ Also, $$\frac{n^{2}}{k^{2}}w(R)\leq |E(G)|\leq \left(\frac{1}{4}-\epsilon_{2}\right)\frac{n^{2}}{2},$$ so $w(R)\leq (\frac{1}{4}-\epsilon_{2})\frac{k^{2}}{2}$. But then by Lemma \ref{lemma:admissible1}, $(R,w)$ contains an $(H,\epsilon_{1})$-admissible subgraph for some $H\in \mathcal{H}$. Let $h=|V(H)|\leq 8$. 
	
	Let $\delta=\frac{1}{2M}(\frac{\epsilon_{1}}{2})^{128}$ and $\alpha=4\delta<\frac{1}{2M}(\frac{\epsilon_{1}}{2})^{16}$. Then the parameters $h,\alpha,\beta,\delta,\lambda,\epsilon_{1}$ satisfy the conditions of Lemma \ref{lemma:embedding}. Hence, if $G$ is $(\alpha,\beta)$-dense and $\delta$-full, then $G$ contains a weak-2-subdivision of $H$ as an induced subgraph. Note that a weak-subdivision of a partial subdivision of $K_{5}$ might not be a weak-subdivision of $K_{5}$, but it contains a weak-subdivision of $K_{5}$ as an induced subgraph. Thus, $G$ contains a weak-subdivision of $K_{5}$ as an induced subgraph. 
	
\end{proof}

\section{Concluding remarks}\label{sect:remarks}

It follows from the combination of Theorem \ref{thm:subdivision} and Theorem \ref{thm:weakK5} that if $G$ is a graph with $n$ vertices, at most $(\frac{1}{4}-\epsilon)\frac{n^{2}}{2}$ edges such that $G$ does not contain a weak-subdivision of $K_{5}$ as an induced subgraph, then $\overline{G}$ contains a linear sized bi-clique. It would be interesting to decide whether this statement can be generalized for graphs with no induced weak-subdivision of the complete graph $K_{t}$.

\begin{conjecture}\label{conj:Kt}
	For every $\epsilon>0$ and integer $t\geq 3$, there exists $\delta>0$ such that the following holds. If $G$ is a graph with $n$ vertices and at most $(\frac{1}{t-1}-\epsilon)\frac{n^{2}}{2}$ edges such that $G$ does not contain an induced weak-subdivision of $K_{t}$, then $\overline{G}$ contains a bi-clique of size at least $\delta n$. 
\end{conjecture}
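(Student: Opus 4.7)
The plan is to follow the three-stage strategy used for Theorem \ref{thm:weakK5}, adapting each stage to the $K_{t}$ setting. First, I would reduce to the dense case. Since every induced subdivision of $K_{t}$ is, in particular, an induced weak-subdivision of $K_{t}$, the hypothesis that $G$ contains no induced weak-subdivision of $K_{t}$ implies $G$ contains no induced subdivision of $K_{t}$, so Theorem \ref{thm:subdivision} (applied with $H=K_{t}$) yields a constant $C_{t}>0$ such that if $|E(G)|\leq C_{t}n^{2}$ then $\overline{G}$ already contains a linear-sized bi-clique. This lets us assume $G$ is $(\alpha,\beta)$-dense and $\delta$-full for suitably chosen parameters, exactly as Lemma \ref{lemma:separator} is used in the proof of Theorem \ref{thm:mainthm}.

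Second, apply the Regularity lemma to obtain a $\lambda$-regular partition of $G$ with reduced graph $(R,w)$ on $k$ vertices, so that $|E(R)|\geq (1-\epsilon_{1})\binom{k}{2}$ and $w(R)\leq (\frac{1}{t-1}-\epsilon)\frac{k^{2}}{2}$. The embedding lemma (Lemma \ref{lemma:embedding}) is graph-independent: its proof works verbatim for any graph $H$ with a bounded number of vertices. So if we can show that some suitable family $\mathcal{H}_{t}$ of partial subdivisions of $K_{t}$ with at most $f(t)$ vertices has the property that $(R,w)$ contains an $(H,\epsilon_{1})$-admissible subgraph for some $H\in\mathcal{H}_{t}$, then $G$ contains a weak-2-subdivision of $H$ as an induced subgraph, and hence an induced weak-subdivision of $K_{t}$, the desired contradiction.

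Third, and this is the crux, generalize the admissibility lemma (Lemma \ref{lemma:admissible1}) from $t=5$ to arbitrary $t$. The reduction to the ``clean'' complete weighted graph with weights in $\{0,\frac{1}{2},1\}$ carried out by Propositions \ref{prop:diffweights} and \ref{prop:partition} goes through verbatim, since those arguments never use the specific form of $\mathcal{H}$ and rely only on the minimality of $w(R)$. We are therefore left with a vertex-weighted graph $(Q,\phi)$ on $s$ vertices with $\sum_{a}\phi(a)=1$, no $H$-admissible induced subgraph for any $H\in\mathcal{H}_{t}$, and the target inequality $\phi(Q)\geq \frac{1}{t-1}$. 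The extremal configuration is $Q=\overline{K_{t-1}}$ with $\phi\equiv \frac{1}{t-1}$: it has $\phi(Q)=\frac{1}{t-1}$ exactly, and admits no $H$-admissible subgraph since every $H\in\mathcal{H}_{t}$ has at least $t>s$ vertices.

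The main obstacle is precisely the analogue of Proposition \ref{prop:1/4}, i.e.\ proving that $\phi(Q)\geq \frac{1}{t-1}$ whenever $(Q,\phi)$ has no $H$-admissible subgraph for any $H\in\mathcal{H}_{t}$. I would break this into three parts. (i) If $s\leq t-1$, the trivial bound $\phi(Q)\geq \sum_{a}\phi(a)^{2}\geq \frac{1}{s}\geq \frac{1}{t-1}$ suffices. (ii) Prove a bound $s\leq f(t)$ analogous to Claim \ref{claim:s<8}: starting from basic extremal observations (any $t$-subset of $V(Q)$ must contain the ``obstructions to $K_{t}$-admissibility'' from Observation \ref{obs:basic}; the complement of $Q$ has bounded maximum degree as in Observation \ref{obs:deg5}; forbidden configurations like those in Observation \ref{obs:K23}) and iterating, one should show that when $s$ exceeds some $f(t)$ one can always use a long path in a low-degree region of $Q$ to build a $K_{t}^{(p)}$-admissible subgraph, as in Claim \ref{claim:s<8}. (iii) For each $s$ with $t\leq s\leq f(t)$, show that $Q$ must contain enough edges so that $\phi(Q)=\sum \phi(a)^{2}+\sum_{ab\in E(Q)}\phi(a)\phi(b)\geq \frac{1}{t-1}$, by using rearrangement inequalities in the spirit of Claim \ref{claim:inequality} to dominate the extremal expression $((\phi(1)+\cdots+\phi(s-t+2))^{2}+\phi(s-t+3)^{2}+\cdots+\phi(s)^{2})/(t-1)\cdot\ldots$ by the actual sum. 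Part (iii) is where the hard combinatorial work lies: the clean case-by-case calculation done for $t=5$ becomes increasingly elaborate and would likely benefit from an inductive approach, using the induction hypothesis that a $(Q,\phi)$ avoiding $K_{t-1}$-admissibility satisfies $\phi(Q)\geq \frac{1}{t-2}$, combined with a structural statement forcing $Q$ to decompose into a $K_{t-2}$-admissibility-avoiding subgraph plus a controlled remainder. Getting this induction to close cleanly, together with the right choice of $f(t)$ and $\mathcal{H}_{t}$, is the principal difficulty of the program.
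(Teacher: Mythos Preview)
The statement you are addressing is stated in the paper as a \emph{conjecture}, not a theorem; the paper does not prove it. What the paper does do is exactly what you outline: it observes that the reduction to the dense case (via Theorem~\ref{thm:subdivision} in place of Lemma~\ref{lemma:separator}), the Regularity lemma, the embedding lemma (Lemma~\ref{lemma:embedding}), and the clean-up steps (Propositions~\ref{prop:diffweights} and~\ref{prop:partition}) all carry over verbatim to general $t$, and that the single missing ingredient is the analogue of Proposition~\ref{prop:1/4}, which the paper records separately as Conjecture~\ref{conj:newprop}. So your plan matches the paper's own assessment of the problem precisely, and your identification of part~(iii) as ``the principal difficulty of the program'' is exactly the point at which the paper stops.

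That said, your proposal is a strategy, not a proof. Parts~(ii) and~(iii) of your plan are not carried out, and there is no indication that the inductive scheme you sketch for~(iii) actually closes; the paper treats the corresponding inequality $\phi(Q)\geq \frac{1}{t-1}$ as genuinely open. One concrete remark: your part~(ii) aims to bound $s\leq f(t)$, and in fact a cheap bound $s<R(t)$ (the Ramsey number) is immediate, since both a $t$-clique and a $t$-independent set in $Q$ are $K_{t}$-admissible; the paper uses this in its proof of the weaker Theorem following Conjecture~\ref{conj:newprop}, obtaining the bound $\phi(Q)\geq \frac{1}{2(t-1)}+\frac{1}{2R(t)}$ via a Tur\'an-type argument on $(Q,\phi)$. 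You might note that this already yields the conjecture with $\frac{1}{t-1}$ replaced by $\frac{1}{2(t-1)}$, which is the best the paper achieves.
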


Note that if this conjecture is true, it is sharp. Indeed, if $G$ is a graph whose vertex set can be partitioned into $t-1$ parts $V_{1},\dots, V_{t-1}$ such that $V_{i}$ spans a clique for $i=1,\dots,t-1$, then $G$ does not contain a weak-subdivision of $K_{t}$. But using standard probabilistic techniques, it is easy to construct such graphs $G$ with less than $(\frac{1}{t-1}+\epsilon)\frac{n^{2}}{2}$ edges such that the size of the largest bi-clique in $\overline{G}$ is $O(\frac{1}{\epsilon}\log n)$. Moreover, a positive answer to Conjecture \ref{conj:Kt} would have similar implications as Theorem \ref{thm:mainthm} for intersection graphs of curves defined on certain surfaces. We omit the details.

In order the prove Conjecture \ref{conj:Kt}, it would be enough to prove the following generalization of Proposition \ref{prop:1/4}, as this is the only part of our proof that does not apply to general families of graphs $\mathcal{H}$.

\begin{conjecture}\label{conj:newprop}
	Let $\mathcal{H}$ be the family of partial subdivisions of $K_{t}$. If $(Q,\phi)$ has no $H$-admissible subgraph for any $H\in\mathcal{H}$, then $\phi(Q)\geq \frac{1}{t-1}$. 
\end{conjecture}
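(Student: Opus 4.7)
The plan is to follow the proof of Proposition~\ref{prop:1/4} and attempt to generalize its ingredients from $t = 5$ to arbitrary $t \geq 3$. The extremal case, $Q$ being $t - 1$ isolated vertices each of weight $1/(t-1)$, gives $\phi(Q) = 1/(t-1)$ and indicates what the structural part of the argument must prove. A useful starting point is the reformulation: for $|A| = t$, the induced subgraph $Q[A]$ is $K_t$-admissible iff there is a total order on $A$ such that, in every induced path $u - w - v$ of length two in $Q[A]$, the middle vertex $w$ precedes both endpoints $u, v$. Equivalently, such an order exists iff the directed graph on $A$ with arcs $w \to u$ and $w \to v$ for every induced $P_3$ is acyclic. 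The admissibility condition for a partial subdivision $H$ of $K_t$ is analogous, with side-vertices threaded into the order according to the paths of $H$.

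I would first bound $s = |V(Q)|$ by roughly $2(t - 1)$, generalizing Claim~\ref{claim:s<8}. The plan is to generalize each of Observations~\ref{obs:basic}--\ref{obs:deg5}: show that every $t$-vertex subset of $V(Q)$ must contain both pairs of disjoint edges and pairs of neighboring edges; forbid certain universal $(t + 1)$-vertex patterns by constructing explicit $K_t$-admissible orderings from them; prove that the maximum degree of $\overline Q$ is at most $t - 1$ when $s$ is sufficiently large; and finally mimic the path-routing argument of Claim~\ref{claim:s<8} to obtain a partial-subdivision-admissible configuration whenever $s$ exceeds the threshold. Having bounded $s$, for each $s \leq 2(t-1)$ I would sort the vertices so that $\phi(1) \leq \cdots \leq \phi(s)$ and use a rearrangement-style inequality (generalizing Claim~\ref{claim:inequality}) to pair the weights into $t - 1$ groups $y_1, \ldots, y_{t-1}$ summing to $1$, with the structural constraints of the previous step guaranteeing enough edges of $Q$ to absorb the cross-terms. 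Then
\[
\phi(Q) \;\geq\; \sum_{i=1}^{t-1} y_i^{2} \;\geq\; \frac{1}{t-1}
\]
by the power-mean inequality, completing the proof.

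The main obstacle is the analogue of Observation~\ref{obs:deg5}, the maximum-degree bound on $\overline Q$. The $t = 5$ proof uses enumeration that does not scale, and for larger $t$ one needs a cleaner structural statement about why an unstructured non-neighborhood of a high-degree vertex in $\overline Q$ must contain a partial-subdivision-admissible configuration. A secondary obstacle is the combinatorial explosion of cases in the weight-pairing analysis as $s$ approaches $2(t - 1)$. A possible way around both is to prove a stability theorem: any admissibility-free $(Q, \phi)$ with $\phi(Q)$ close to $1/(t - 1)$ must be structurally close to a blow-up of the empty graph on $t - 1$ vertices. Such a theorem could plausibly be established by induction on $t$, with the $K_t$ case as the base and a path-contraction step handling subdivisions, thereby bypassing much of the detailed case analysis.
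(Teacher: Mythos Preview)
The statement you are attempting is labeled a \emph{Conjecture} in the paper, and the paper does not prove it. There is no ``paper's own proof'' to compare against: Conjecture~\ref{conj:newprop} is posed as an open problem in the concluding remarks, and the paper only establishes the weaker bound $\phi(Q)\geq \frac{1}{2(t-1)}+\epsilon$ (for some $\epsilon>0$ depending on $t$) via a short Ramsey argument. That argument is quite different from your plan: one observes that if $s\geq R(t)$ then $Q$ contains a clique or independent set of size $t$, both of which are $K_t$-admissible; hence $s<R(t)$, and then a Tur\'an-type symmetrization on independent sets of size $t$ yields $\phi(Q)\geq \frac{1}{2s}+\frac{1}{2(t-1)}$.

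Your proposal is a research outline rather than a proof, and you correctly flag its gaps. The plan to push the $t=5$ argument through for general $t$ runs into exactly the difficulties you name: the bound on $s$ in Claim~\ref{claim:s<8} and the degree bound in Observation~\ref{obs:deg5} are obtained by hand enumeration, and there is no evident uniform mechanism replacing them. In particular, your target $s\leq 2(t-1)$ is not established anywhere, and the weight-pairing step would require, for each $s$ up to that bound, enough structural edges in $Q$ to absorb all cross-terms into $t-1$ squares --- the paper already needs a lengthy appendix to handle $s=7$ when $t=5$. The stability idea you sketch at the end is plausible as a strategy but is itself a conjecture; nothing in the paper supplies the inductive or contraction step you would need. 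In short, what you have written is a reasonable description of why the conjecture is open, not a proof of it.
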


We can prove the following slightly weaker version of Conjecture \ref{conj:Kt}, by proving a slightly weaker version of Conjecture \ref{conj:newprop}.

\begin{theorem}
	For every integer $t\geq 3$, there exists $\delta>0$ such that the following holds. If $G$ is a graph with $n$ vertices and at most $\frac{1}{2(t-1)}\cdot\frac{n^{2}}{2}$ edges such that $G$ does not contain an induced weak-subdivision of $K_{t}$, then $\overline{G}$ contains a bi-clique of size at least $\delta n$. 
\end{theorem}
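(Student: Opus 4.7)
The plan is to run the same overall machinery as in the proof of Theorem \ref{thm:weakK5}, substituting the string-graph-specific ingredients (Lemma \ref{lemma:separator} and the delicate Proposition \ref{prop:1/4}) with more elementary inputs. Since $G$ contains no induced weak-subdivision of $K_{t}$, it contains no induced subdivision of $K_{t}$, so Theorem \ref{thm:subdivision} applies to every induced subgraph of $G$. This yields a constant $C=C(t)>0$ playing the role of the constant in Lemma \ref{lemma:separator}: any $\delta$-full graph satisfying our hypothesis is $(4\delta,C)$-dense. Using this in place of Lemma \ref{lemma:separator}, I apply the Regularity Lemma and pass, through the reduction in the proof of Lemma \ref{lemma:admissible1}, to the complete weighted graph $(R_{0},w_{0})$ with $w_{0}\in\{0,\frac{1}{2},1\}$ and then to the vertex-weighted graph $(Q,\phi)$ of Proposition \ref{prop:partition}. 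The embedding Lemma \ref{lemma:embedding} is applied verbatim with $H=K_{t}$: producing any $(K_{t},\epsilon_{1})$-admissible subgraph of $R$ already gives a weak-$2$-subdivision of $K_{t}$ inside $G$, contradicting the hypothesis.

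Everything therefore reduces to the following weak analogue of Proposition \ref{prop:1/4}: if $(Q,\phi)$ contains no $K_{t}$-admissible induced subgraph, then $\phi(Q)\geq \frac{1}{2(t-1)}$. The key observation is that any $A\subseteq V(Q)$ of size $t$ with $E(Q[A])=\emptyset$ is automatically $K_{t}$-admissible for every bijection $b$ and every ordering $\prec$: the side condition becomes vacuous because every pair in $A$ is a non-edge. Hence the independence number of $Q$ is at most $t-1$, equivalently $\omega(\overline{Q})\leq t-1$, and the classical Motzkin--Straus theorem gives
\[
\sum_{ab\in E(\overline{Q})}\phi(a)\phi(b)\;\leq\;\tfrac{1}{2}\Bigl(1-\tfrac{1}{t-1}\Bigr).
\]
Combining this with $\bigl(\sum_{a}\phi(a)\bigr)^{2}=\sum_{a}\phi(a)^{2}+2\sum_{ab\in E(Q)}\phi(a)\phi(b)+2\sum_{ab\in E(\overline{Q})}\phi(a)\phi(b)=1$ and unpacking the definition $\phi(Q)=\sum_{a}\phi(a)^{2}+\sum_{ab\in E(Q)}\phi(a)\phi(b)$ gives
\[
\phi(Q)\;=\;\tfrac{1}{2}+\tfrac{1}{2}\sum_{a}\phi(a)^{2}-\sum_{ab\in E(\overline{Q})}\phi(a)\phi(b)\;\geq\;\tfrac{1}{2(t-1)},
\]
as required. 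Note that, in contrast to Proposition \ref{prop:1/4}, we do not need to take $\mathcal{H}$ to be a whole family of partial subdivisions; it suffices to forbid $K_{t}$ itself.

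The main bookkeeping point is that Motzkin--Straus delivers a non-strict bound, while the statement of the theorem allows edge density exactly $\frac{1}{2(t-1)}\cdot\frac{n^{2}}{2}$, and the identity (\ref{equ1}) only gives an $O(1/k)$ slack. I would handle this exactly as in the proof of Lemma \ref{lemma:admissible1}: choose the auxiliary constants $\epsilon_{1}$ and the lower bound $k_{0}$ on the number of parts of the regular partition aggressively enough that a strict gap survives on the side of $w(R)$, which contradicts the assumed edge bound. The final $\delta=\delta(t)$ is then determined by these choices together with the constant $C(t)$ coming from Theorem \ref{thm:subdivision}. Morally, the only genuinely new content beyond Section \ref{sect:proof} is the Motzkin--Straus step; everything else is a direct re-run of the same regularity-and-embedding scheme with $\mathcal{H}=\{K_{t}\}$.
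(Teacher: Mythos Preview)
Your overall strategy is the same as the paper's: replace Lemma~\ref{lemma:separator} by Theorem~\ref{thm:subdivision}, run the regularity/embedding machine with $\mathcal{H}=\{K_{t}\}$, and reduce everything to a lower bound on $\phi(Q)$. Your Motzkin--Straus computation is also essentially the paper's Tur\'an-type estimate in disguise; both yield
\[
\phi(Q)\;=\;\tfrac{1}{2}+\tfrac{1}{2}\sum_{a}\phi(a)^{2}-\sum_{ab\in E(\overline{Q})}\phi(a)\phi(b)\;\geq\;\frac{1}{2(t-1)}+\frac{1}{2}\sum_{a}\phi(a)^{2}\;\geq\;\frac{1}{2(t-1)}+\frac{1}{2s}.
\]

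The genuine gap is in your last paragraph. The proposed fix --- ``choose $\epsilon_{1}$ and $k_{0}$ aggressively'' --- does not work. Identity~(\ref{equ1}) reads $w(R)=\frac{k^{2}}{2}\bigl(\phi(Q)-\frac{1}{k}\bigr)$, so the $-\frac{1}{k}$ term pulls in the wrong direction, and the passage from $(R,w)$ to $(R_{0},w_{0})$ costs a further $O(\epsilon_{1}k^{2})$. To contradict the hypothesis $|E(G)|\le\frac{1}{2(t-1)}\cdot\frac{n^{2}}{2}$ you therefore need $\phi(Q)\ge\frac{1}{2(t-1)}+\epsilon$ for some fixed $\epsilon=\epsilon(t)>0$ independent of $k$. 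Your inequality only supplies the extra $\frac{1}{2s}$, and nothing in your argument bounds $s$: a priori $s$ can be as large as $k$, in which case $\frac{1}{2s}-\frac{1}{k}\le 0$ and no choice of $\epsilon_{1}$ or $k_{0}$ rescues the contradiction.

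The missing observation --- and this is exactly what the paper adds --- is that a \emph{clique} of size $t$ in $Q$ is also $K_{t}$-admissible: if $Q'$ is complete then the hypothesis ``$b(x)b(y)\notin E(Q')$'' in the admissibility condition is never triggered, so any bijection and ordering witness admissibility. Together with your observation about independent sets, Ramsey's theorem forces $s<R(t)$, whence $\phi(Q)\ge\frac{1}{2(t-1)}+\frac{1}{2R(t)}$. Setting $\epsilon=\frac{1}{2R(t)}$ then makes your scheme go through, and the resulting proof coincides with the paper's.
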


\begin{proof}[Sketch proof]
  It is enough to show that there exists $\epsilon>0$ such that if $(Q,\phi)$ has no $K_{t}$-admissible subgraph, then $\phi(Q)\geq \frac{1}{2(t-1)}+\epsilon$. 
  
  Let $(Q,\phi)$ be the vertex weighted graph on vertex set $[s]$ with no $K_{t}$-admissible subgraph. Then $s< R(t)$, where $R(t)$ is the usual Ramsey number, that is, $R(t)$ is the smallest integer $N$ such that every graph on $N$ vertices contains either a clique or independent set of size $t$. Indeed, if $s\geq R(t)$, then $Q$ contains either a clique or independent set of size $t$, but both are $K_{t}$-admissible.
  
    Now we show that if  $\phi(Q)\geq \frac{1}{2s}+\frac{1}{2(t-1)}$, then $Q$ contains an independent set of size $t$, which is impossible. But then setting $\epsilon=\frac{1}{2R(t)}$ finishes the proof.
    
    By a similar Tur\'an type argument as in Proposition \ref{prop:partition}, we can show that among the vertex weighted graphs $(Q,\phi)$ with vertex set $[s]$, fixed weight function $\phi$, and no independent set of size $t$, the one that minimizes $\phi(Q)$ has the following form. The vertex set of $Q$ can be partitioned into $t-1$ parts $V_{1},\dots,V_{t-1}$ such that $V_{i}$ induces a clique in $Q$ for $i=1,\dots,t-1$. But then
  \begin{align*}
  \phi(Q)&=\sum_{i=1}^{s}\phi(i)^{2}+\sum_{i=1}^{t-1}\sum_{a,b\in V_{i},a<b}\phi(a)\phi(b)\\
  &=\frac{1}{2}\sum_{i=1}^{s}\phi(i)^{2}+\frac{1}{2}\sum_{i=1}^{t-1}\left(\sum_{a\in V_{i}}\phi(a)\right)^{2}\\
  &\geq \frac{1}{2s}+\frac{1}{2(t-1)}.
  \end{align*}
\end{proof}

\section{Acknowledgments}
I would like to thank J\'anos Pach for valuable discussions.

\section*{Appendix - Proof of Proposition \ref{prop:1/4}, $s=7$}

 Suppose that $s=7$. First, we show that if $Q$ contains either a cycle of length 6 or 7, we have $\phi(Q)\geq \frac{1}{4}$. Indeed, let $a_{1}<\dots<a_{r}$ be the vertices of this cycle, where $r\in \{6,7\}$, and let $\pi\in S_{r}$ be a permutation such that $a_{\pi(1)}a_{\pi(2)},a_{\pi(2)}a_{\pi(2)},\dots, a_{\pi(r)}a_{\pi(1)}$ are the edges of this cycle. Then
 $$\sum_{i=1}^{r}\phi(a_{\pi(i)})\phi(a_{\pi(i+1)})\geq \sum_{i=1}^{r}\phi(a_{i})\phi(a_{r+1-i})\geq 2\phi(1)\phi(6)+2\phi(2)\phi(5)+2\phi(3)\phi(4),$$
 where the first inequality is the consequence of the Rearrangement inequality \cite{HLP52}. Hence, 
\begin{align*}
\phi(Q)&\geq\sum_{i=1}^{7}\phi(i)^{2}+2\phi(1)\phi(6)+2\phi(2)\phi(5)+2\phi(3)\phi(4)\\
&\geq (\phi(1)+\phi(6))^{2}+(\phi(2)+\phi(5))^{2}+(\phi(3)+\phi(4))^{2}+\phi(7)^{2}\\
&\geq \frac{1}{4}.
\end{align*}

By Observation \ref{obs:deg5}, every vertex of $Q$ has degree at least $2$. Suppose that $Q$ has a vertex $a$ of degree 2, and let $b_{1},b_{2}$ be the vertices joined to $a$, and let $C=\{c_{1},c_{2},c_{3},c_{4}\}$ be the rest of the vertices. By Observation \ref{obs:K4}, $Q[C]$ is either a cycle of length 4 or path of length 3.

\begin{description}
	\item[Case 1.] $C$ is a cycle of length $4$. Without loss of generality, let $\phi(c_{1})\leq \phi(c_{2})\leq \phi(c_{3})\leq \phi(c_{4})$ and $\phi(b_{1})\leq \phi(b_{2})$. Then
	\begin{align*} 
	\phi(Q)&=\sum_{i=1}^{7}\phi(i)^{2}+\sum_{xy\in E(Q)}\phi(x)\phi(y)\\
	&\geq \sum_{i=1}^{7}\phi(i)^{2}+2\phi(c_{1})\phi(c_{4})+2\phi(c_{2})\phi(c_{3})+2\phi(a)\phi(b_{1})\\
	&=(\phi(c_{1})+\phi(c_{4}))^{2}+(\phi(c_{2})+\phi(c_{3}))^{2}+(\phi(a)+\phi(b_{1}))^{2}+\phi(b_{2})^{2}\\
	&\geq \frac{1}{4},
	\end{align*}
	where the first inequality holds by two applications of Claim \ref{claim:inequality}.
	
	\item[Case 2.] $C$ is a path of length $3$. Without loss of generality, let the edges of this cycle be $c_{1}c_{2},c_{2}c_{3},c_{3}c_{4}$. As $c_{1}$ and $c_{2}$ both have degree at least $2$, there is an edge from both $c_{1}$ and $c_{4}$ to $\{b_{1},b_{2}\}$. Without loss of generality, suppose that $c_{1}b_{1}$ is an edge. If $c_{4}b_{2}$ is an edge, then $Q$ contains a cycle of length $7$, namely $c_{1}c_{2}c_{3}c_{4}b_{2}ab_{1}$, so we are done. Hence, we may assume that $c_{4}b_{1}$ is an edge, and $c_{1}b_{2}$ and $c_{4}b_{2}$ are non-edges. But then both $b_{2}c_{2}$ and $b_{2}c_{3}$ are edges, because if $b_{2}c_{2}$ is a non-edge, say, then there are no edges between $\{a,b_{2}\}$ and $\{c_{1},c_{2},c_{4}\}$, contradicting Observation \ref{obs:K23}. But then $Q$ contains cycle of length $6$, namely $c_{1}b_{1}c_{4}c_{3}b_{2}c_{2}$, so we are done. See Figure \ref{figure4}.
\end{description}

	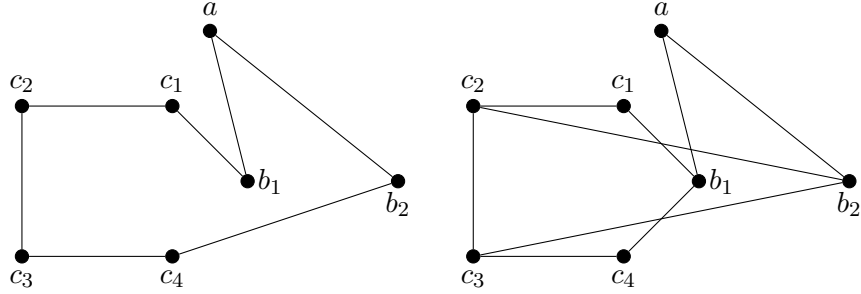
\begin{figure}[t]
	\begin{center}
		\begin{tikzpicture}[scale=1]
		
		\node[vertex,minimum size=5pt] (a) at (-6,2) {} ; \node[] at (-6,2.3) {$a$} ;
		\node[vertex,minimum size=5pt] (c1) at (-6.5,1) {} ; \node[] at (-6.5,1.3) {$c_{1}$} ;
		\node[vertex,minimum size=5pt] (c2) at (-8.5,1) {} ; \node[] at (-8.5,1.3) {$c_{2}$} ;
		\node[vertex,minimum size=5pt] (c4) at (-6.5,-1) {} ; \node[] at (-6.5,-1.3) {$c_{4}$} ;
		\node[vertex,minimum size=5pt] (c3) at (-8.5,-1) {} ; \node[] at (-8.5,-1.3) {$c_{3}$} ;
		
		\node[vertex,minimum size=5pt] (b1) at (-5.5,0) {} ; \node[] at (-5.2,0) {$b_{1}$} ;
		
		\node[vertex,minimum size=5pt] (b2) at (-3.5,0) {} ; \node[] at (-3.5,-0.3) {$b_{2}$} ;
		
		\draw (c1) -- (c2) -- (c3) -- (c4) ;
		\draw (a) -- (b1) ; \draw (a) -- (b2) ;
		\draw (c1) -- (b1) ; \draw (c4) -- (b2) ;
	%	\draw (b2) -- (c2) ; \draw (b2) -- (c3) ;
		
		\node[vertex,minimum size=5pt] (a) at (0,2) {} ; \node[] at (0,2.3) {$a$} ;
		\node[vertex,minimum size=5pt] (c1) at (-0.5,1) {} ; \node[] at (-0.5,1.3) {$c_{1}$} ;
		\node[vertex,minimum size=5pt] (c2) at (-2.5,1) {} ; \node[] at (-2.5,1.3) {$c_{2}$} ;
		\node[vertex,minimum size=5pt] (c4) at (-0.5,-1) {} ; \node[] at (-0.5,-1.3) {$c_{4}$} ;
		\node[vertex,minimum size=5pt] (c3) at (-2.5,-1) {} ; \node[] at (-2.5,-1.3) {$c_{3}$} ;
		
		\node[vertex,minimum size=5pt] (b1) at (0.5,0) {} ; \node[] at (0.8,0) {$b_{1}$} ;
	
		\node[vertex,minimum size=5pt] (b2) at (2.5,0) {} ; \node[] at (2.5,-0.3) {$b_{2}$} ;
		
		\draw (c1) -- (c2) -- (c3) -- (c4) ;
		\draw (a) -- (b1) ; \draw (a) -- (b2) ;
	    \draw (c1) -- (b1) ; \draw (c4) -- (b1) ;
	    \draw (b2) -- (c2) ; \draw (b2) -- (c3) ;
		
		\end{tikzpicture}
		\caption{An illustration for Case 2. Left is the subcase where $c_{4}b_{2}$ is an edge, right is the subcase where $c_{4}b_{2}$ is not an edge.}
		\label{figure4}
	\end{center}
\end{figure}

Therefore, we have $\phi(Q)\geq\frac{1}{4}$ if $Q$ contains a vertex with degree 2. Hence, we can suppose that every vertex has degree at least 3 in $Q$. As 7 is odd, $Q$ has at least one vertex $a$ of even degree, so the degree of $a$ is either 4 or 6. 

First, suppose that $a$ has degree $4$, and let $B=\{b_{1},b_{2},b_{3},b_{4}\}$  be the neighbors of $a$, and $c_{1},c_{2}$ be the rest of the vertices. By Observation \ref{obs:K4}, $Q[B]$ is either a cycle of length 4, or a path of length 3. In both cases $Q[B\cup\{a\}]$ contains a cycle $C$ of length $5$. If $c_{1}c_{2}$ is not an edge, then there are at least three edges between $c_{1}$ and $C$, so there are two consecutive vertices of $C$ joined to $c_{1}$. But then $C\cup \{c_{1}\}$ contains a cycle of length 6, so we are done. Therefore, we can suppose the $c_{1}c_{2}$ is an edge. Then there are at least 2 edges between $c_{i}$ and $C$ for $i=1,2$. In this case, we can find two disjoint edges between $\{c_{1},c_{2}\}$ and $C$, which also implies the existence of a cycle of length at least 6 in $C$.

The only remaining case is when the degree of $a$ is $6$. Let $b\in V(Q)\setminus \{a\}$, and let $A$ be the neighborhood of $b$ in $V(Q)\setminus \{a\}$. If $|A|\geq 3$, then we get a contradiction by 3. in Observation \ref{obs:K23}, and if $|A|\leq 2$, then $|V(Q)\setminus (A\cup\{a\})|\geq 3$, so we get a contradiction by 2. in Observation \ref{obs:K23}. 

\end{document}